\documentclass[12pt]{article}

\usepackage[T1]{fontenc}
\usepackage[utf8]{inputenc}
\usepackage{mathtools}
\usepackage{amsmath,amscd,diagbox}
\usepackage{indentfirst}
\usepackage{mathrsfs}
\usepackage{amsfonts}
\usepackage{arydshln}
\usepackage{enumerate}
\usepackage{setspace}
\usepackage{amssymb,amsthm,cases}
\usepackage{amsbsy,pifont}
\usepackage{latexsym,tabularx}
\usepackage{cite}
\usepackage{bbm,multirow}
\usepackage{graphicx,epsfig,extarrows,dsfont}
\usepackage[final,allcolors=blue,colorlinks=true]{hyperref}
\usepackage[normalem]{ulem}
\input xy
\usepackage{caption,subcaption} 
\usepackage{tikz}
\usepackage{tikz-cd}
\usetikzlibrary{decorations.pathreplacing,decorations.markings}
\usetikzlibrary{cd}
\usetikzlibrary{shapes.geometric}
\usetikzlibrary{arrows,arrows.meta}
\usetikzlibrary{graphs,positioning}
\usetikzlibrary{matrix,decorations.pathmorphing}
\usetikzlibrary{math,calc,intersections,through,angles,arrows.meta,shapes.geometric,shadows,quotes,spy,datavisualization,datavisualization.formats.functions,plotmarks}
\tikzset{every picture/.style={samples=300,smooth,line join=round,thick,>=stealth}}

\tikzset{
	on each segment/.style={
		decorate,
		decoration={
			show path construction,
			moveto code={},
			lineto code={
				\path [#1]
				(\tikzinputsegmentfirst) -- (\tikzinputsegmentlast);
			},
			curveto code={
				\path [#1] (\tikzinputsegmentfirst)
				.. controls
				(\tikzinputsegmentsupporta) and (\tikzinputsegmentsupportb)
				..
				(\tikzinputsegmentlast);
			},
			closepath code={
				\path [#1]
				(\tikzinputsegmentfirst) -- (\tikzinputsegmentlast);
			},
		},
	},
	mid arrow/.style={postaction={decorate,decoration={
				markings,
				mark=at position .5 with {\arrow[#1]{stealth}}
	}}},
}

\numberwithin{equation}{section}

\theoremstyle{plain}
\newtheorem{theorem}{Theorem}[section]

\newtheorem{corollary}[theorem]{Corollary}

\newtheorem{lemma}[theorem]{Lemma}
\newtheorem{proposition}[theorem]{Proposition}

\theoremstyle{definition}
\newtheorem{definition}[theorem]{Definition}

\newtheorem{remark}[theorem]{Remark}

\def\XXint#1#2#3{{\setbox0=\hbox{$#1{#2#3}{\int}$}
		\vcenter{\hbox{$#2#3$}}\kern-.5\wd0}}

\DeclareMathSymbol{\subseteq}{\mathrel}{symbols}{"12}
\DeclareMathSymbol{\supseteq}{\mathrel}{symbols}{"13} 
\DeclareMathSymbol{\subsetneq}{\mathrel}{AMSb}{"28}                  \DeclareMathSymbol{\supsetneq}{\mathrel}{AMSb}{"29}    

\DeclareMathSymbol{\nsubseteq}{\mathrel}{AMSb}{"2A}                  

\DeclareMathSymbol{\nsupseteq}{\mathrel}{AMSb}{"2B}

\DeclareMathDelimiter{\langle}{\mathop}{symbols}{"68}{largesymbols}{"0A}
\DeclareMathDelimiter{\rangle}{\mathclose}{symbols}{"69}{largesymbols}{"0B}

\DeclareSymbolFont{txfontsA}{U}{txmia}{m}{it}
\SetSymbolFont{txfontsA}{bold}{U}{txmia}{bx}{it}
\DeclareFontSubstitution{U}{txmia}{m}{it}
\DeclareMathSymbol{\upalpha}{\mathord}{txfontsA}{"0B}
\DeclareMathSymbol{\upbeta}{\mathord}{txfontsA}{"0C}
\DeclareMathSymbol{\upgamma}{\mathord}{txfontsA}{"0D}
\DeclareMathSymbol{\updelta}{\mathord}{txfontsA}{"0E}
\DeclareMathSymbol{\upepsilon}{\mathord}{txfontsA}{"0F}
\DeclareMathSymbol{\upzeta}{\mathord}{txfontsA}{"10}
\DeclareMathSymbol{\upeta}{\mathord}{txfontsA}{"11}
\DeclareMathSymbol{\uptheta}{\mathord}{txfontsA}{"12}
\DeclareMathSymbol{\upiota}{\mathord}{txfontsA}{"13}
\DeclareMathSymbol{\upkappa}{\mathord}{txfontsA}{"14}
\DeclareMathSymbol{\uplambda}{\mathord}{txfontsA}{"15}
\DeclareMathSymbol{\upmu}{\mathord}{txfontsA}{"16}
\DeclareMathSymbol{\upnu}{\mathord}{txfontsA}{"17}
\DeclareMathSymbol{\upxi}{\mathord}{txfontsA}{"18}
\DeclareMathSymbol{\uppi}{\mathord}{txfontsA}{"19}
\DeclareMathSymbol{\uprho}{\mathord}{txfontsA}{"1A}
\DeclareMathSymbol{\upsigma}{\mathord}{txfontsA}{"1B}
\DeclareMathSymbol{\uptau}{\mathord}{txfontsA}{"1C}
\DeclareMathSymbol{\upupsilon}{\mathord}{txfontsA}{"1D}
\DeclareMathSymbol{\upphi}{\mathord}{txfontsA}{"1E}
\DeclareMathSymbol{\upchi}{\mathord}{txfontsA}{"1F}
\DeclareMathSymbol{\uppsi}{\mathord}{txfontsA}{"20}
\DeclareMathSymbol{\upomega}{\mathord}{txfontsA}{"21}
\DeclareMathSymbol{\upvarepsilon}{\mathord}{txfontsA}{"22}
\DeclareMathSymbol{\upvartheta}{\mathord}{txfontsA}{"23}
\DeclareMathSymbol{\upvarpi}{\mathord}{txfontsA}{"24}
\DeclareMathSymbol{\upvarrho}{\mathord}{txfontsA}{"25}
\DeclareMathSymbol{\upvarsigma}{\mathord}{txfontsA}{"26}
\DeclareMathSymbol{\upvarphi}{\mathord}{txfontsA}{"27}                   

\DeclareSymbolFont{ugmL}{OMX}{mdugm}{m}{n}
\SetSymbolFont{ugmL}{bold}{OMX}{mdugm}{b}{n}
\DeclareMathAccent{\wideparen}{\mathord}{ugmL}{"F3}

\def\pt{\partial}

\def\ra{\rightarrow}

\def\e{\epsilon}

\def\ol{\overline}

\def\vp{\varphi}
\def\lg{\langle}

\def\rg{\rangle}

\def\fa{\forall}

\def\bf{\textbf}
\def\pt{\partial}

\def\om{\omega}
\def\Om{\Omega}
\def\la{\lambda}
\def\al{\alpha}
\def\be{\beta}
\def\de{\delta}
\def\ga{\gamma}

\def\si{\sigma}

\def\Ga{\Gamma}

\def\ts{\times}

\def\iy{\infty}

\def\f{\frac}

\def\Lra{\Leftrightarrow}

\def\Span{{\rm{span}}}

\def\df{\mathrm d}

\def\wh{\widehat}

\def\esssup{\operatorname*{ess\ \! sup}}

\def\diag{{\rm diag}}

\def\mcA{\mathcal{A}}

	\DeclareMathOperator{\Div}{div}

	\newcommand{\R}{\mathbb R}
	\newcommand{\N}{\mathbb N}

\begin{document}

\title{Hidden Boundary Trace Regularity and an Observability Estimate with Interior Remainder for Boundary-Degenerate Hyperbolic Equations}

\author{Dong-Hui Yang, J. Zhong}

\date{}

\maketitle{}

\begin{abstract}
	We study hidden boundary trace regularity for two-dimensional hyperbolic equations with boundary degeneracy governed by $\mcA\vp=-\Div(A\nabla \vp)$, where $A=\diag(1,r^\al)$ and $\al\in(0,1)$. We establish well-posedness in weighted Sobolev spaces and prove an $L^2$ trace estimate for the normal derivative on the nondegenerate side $r=1$. Using truncated geometries and Carleman weights adapted to the anisotropic degeneracy, we derive a large-time observability estimate with a lower-order interior remainder. We also identify a framework-level obstruction at the critical threshold $\al=1$: the weighted Dirichlet coercivity underlying the subcritical analysis loses uniformity and exhibits a logarithmic loss on truncated domains.

		\end{abstract}

\medskip
\noindent{\bf Keywords:} degenerate hyperbolic equation; hidden regularity; boundary trace; Carleman estimate; observability estimate with interior remainder; Hardy inequality.

\medskip
\noindent{\bf 2020 Mathematics Subject Classification:} 35L05; 35B65; 35B45; 93B07.

\section{Introduction}

We consider the following two-dimensional degenerate hyperbolic equation:
	\begin{equation}\label{01.10.1}
		\begin{cases}
			\pt_{tt}\vp-\Div (A\nabla \vp)= f, &\mbox{in } Q,\\
			\vp=0, &\mbox{on }\pt Q, \\
			\vp(0)=\vp^0,\pt_t\vp(0)=\vp^1, &\mbox{in } \Om,
		\end{cases}
	\end{equation}
where $\Om=(0,1)\ts (0,1)$, $Q=\Om\ts (0,T)$ for some $T>0$, $A=\diag(1,r^{\al})$ with $\al\in (0,1)$, $\vp^0\in H_0^1(\Om;w)$, $\vp^1\in L^2(\Om)$, and $f\in L^2(Q)$. The weighted Sobolev space $H_0^1(\Om;w)$ will be defined in Section \ref{S2}. 

We write $z=(\theta,r)\in\Om$ and
\begin{equation*}
	w=r^\al, \quad A=\diag(1,w),\mbox{ and }  \mcA \vp=-\Div(A\nabla \vp). 
\end{equation*} 
We also set $\Ga_2^0=\{z=(\theta,r)\in\ol\Om\colon r=0\}$ and $\Ga=\pt\Om-\Ga_2^0$. For $\de\in [0,1]$, we denote by $\Om_\de=\{z=(\theta,r)\in\Om\colon r>\de\}$, $\Ga_2^\de=\{z\in\ol\Om_\de\colon r=\de\}$, and $\Ga_\de=\pt\Om_\de-\Ga_2^\de$. We also write
\begin{equation*}
	\Om_\de^\sharp=(\de,1-\de)\ts (\de,1),
\end{equation*}
which is the portion of the domain bounded away from the four corner points.  

The main difficulty comes from the fact that the equation is hyperbolic, while the diffusion matrix is anisotropic and degenerates on the boundary $\Ga_2^0=\{r=0\}$. This degeneracy prevents one from applying directly the standard hidden regularity theory for strictly hyperbolic equations; see, for example, \cite{Lions,Lasiecka} and the monographs \cite{Weiss,Zuazua}. In particular, the normal trace on the nondegenerate part of the boundary cannot be handled by the usual energy argument alone, and the weighted structure of the operator must be incorporated into both the functional setting and the Carleman weight.

Problems with degeneracy have been intensively studied in the parabolic setting, especially through Carleman estimates and controllability arguments; see \cite{AlabauPar,Araruna,Cannarsa1}. For general background on control-theoretic and Carleman-based methods, we also refer to \cite{Fursikov,Coron,Lebeau}. By comparison, the literature on degenerate hyperbolic equations is much more limited. Representative one-dimensional controllability results may be found in \cite{AlabauWave,Gueye,Fragnelli,ZhangGao}, whereas more anisotropic or higher-dimensional models are treated in \cite{YangGrushin,Yang}. In higher-dimensional boundary-degenerate configurations, both the functional setting and the boundary analysis become substantially more delicate.

The terminology ``shape design'' is used here in the sense of the approximation strategy developed in \cite{GuoShape,YangShape}: one replaces the original degenerate geometry by a family of nondegenerate or less singular auxiliary configurations and chooses the localization together with the Carleman weight so that the resulting estimates remain compatible with the anisotropic geometry of the limit problem.

The paper may be viewed as having two closely related parts. The first one concerns hidden regularity. We formulate \eqref{01.10.1} in weighted Sobolev spaces naturally associated with the coefficient $r^\al$. In Section \ref{S2}, we introduce the spaces $H^1(\Om;w)$, $H_0^1(\Om;w)$, and $D(\mcA)$, prove a Hardy-type inequality, and collect the basic well-posedness and regularity properties of weak solutions. In particular, the solution enjoys enough weighted interior regularity to justify the integrations by parts that will be used in the Carleman analysis, and we obtain the boundary trace estimate
\begin{equation*}
	\|\pt_r\vp\|_{L^2(0,T;L^2(\Ga_2^1))}\leq C\Big(\|\vp^0\|_{H_0^1(\Om;w)}+\|\vp^1\|_{L^2(\Om)}+\|f\|_{L^2(Q)}\Big),
\end{equation*}
which is the qualitative hidden boundary trace regularity part of the paper.

The second part is the shape-design component. In the background, one keeps track of the truncated geometries $\Om_\de=\{r>\de\}$ and of the limiting process $\Om_\de\to\Om$ as $\de\to 0^+$, while the localization and the Carleman weight are designed so as to remain compatible with the anisotropic geometry of the degenerate boundary. The core of the paper is then a Carleman estimate established after localizing the solution away from the lateral sides $\theta=0$ and $\theta=1$. The weight
\begin{equation*}
	\xi(\theta,r,t)=\theta^2+r^{2-\al}-\be(t-t_0)^2,
\end{equation*}
is tailored to the anisotropic scaling of the operator, so that the radial part is compatible with the degeneracy $r^\al$. After conjugation with $e^{s\si}$, the resulting bulk terms and boundary terms can be arranged to produce a coercive estimate. A key point is that the boundary contribution on $\Ga_2^1=\{r=1\}$ survives with the correct sign, while the degenerate boundary $\Ga_2^0=\{r=0\}$ is absorbed by weighted trace identities.

As a consequence, for $T$ satisfying the sufficient condition in Remark \ref{04.05.R2}, we derive the observability estimate with interior remainder stated in Theorems \ref{04.05.T1} and \ref{04.05.T2}: the initial energy is bounded by the $L^2$-norm of $\pt_r\vp$ on the portion $(\de_0,1-\de_0)\times\{1\}$ of the top boundary, that is, away from the corner neighborhoods, plus a lower-order interior term supported near the vertical sides. Thus, Section \ref{S2} supplies the qualitative hidden boundary trace estimate needed to make sense of the observation, whereas Section \ref{S3} yields a quantitative large-time estimate linking the initial energy to that observation. This estimate can also be interpreted as a quantitative hidden regularity statement for the normal derivative on the nondegenerate boundary, but it is not a pure observability inequality because of the interior remainder.

At the same time, the subcritical character of the weighted Dirichlet framework raises a natural question about the critical threshold $\al=1$. In the last section we address this issue at the level of the functional setting. More precisely, we show that the coercive mechanism supporting the present framework cannot be continued uniformly to the endpoint: on truncated domains, the corresponding critical Hardy-Poincar\'e constant necessarily blows up at a logarithmic rate. This does not constitute an endpoint theory at $\al=1$, but it clarifies the framework-level obstruction encountered by the present analysis.

The paper is organized as follows. In Section \ref{S2} we establish the weighted functional framework and the preliminary regularity results. Section \ref{S3} is devoted to the derivation of the Carleman estimate and to the proof of the final observability estimate with interior remainder. Section \ref{S4} discusses the framework-level obstruction of the subcritical Dirichlet setting at the critical threshold $\al=1$.

\section{Preliminaries}\label{S2}
 
We define the weighted Sobolev space
\begin{equation*}
	H^1(\Om;w)=\left\{u\in L^2(\Om)\colon \int_\Om A\nabla u\cdot \nabla u\df z<+\iy\right\},
\end{equation*}
equipped with the inner product and norm
\begin{equation*}
	(u,v)_{H^1(\Om;w)}=\int_\Om \left(uv+\nabla u\cdot A\nabla v\right)\df z, \quad \|u\|_{H^1(\Om;w)}=(u,u)_{H^1(\Om;w)}^\f{1}{2}. 
\end{equation*}
Set
\begin{equation*}
	H_0^1(\Om;w)=\mbox{the closure of $C_0^\iy(\Om)$ in } H^1(\Om;w). 
\end{equation*}
This is the weighted Dirichlet space used throughout the paper. In particular, the zero boundary condition is imposed on the whole boundary $\pt\Om$, including the degenerate side $\Ga_2^0=\{r=0\}$. This is consistent with the weakly degenerate regime $\al\in (0,1)$ in the standard terminology; see, for instance, \cite{Cannarsa1}.

We also define
\begin{equation*}
	H^2(\Om;w)=\left\{u\in H^1(\Om;w)\colon \mcA u\in L^2(\Om)\right\}, 
\end{equation*}
equipped with the inner product and norm
\begin{equation*}
	(u,v)_{H^2(\Om;w)}=(u,v)_{H^1(\Om;w)}+\int_\Om (\mcA u)(\mcA v)\df z,\quad \|u\|_{H^2(\Om;w)}=(u,u)_{H^2(\Om;w)}^\f{1}{2}. 
\end{equation*}

The spaces 
\begin{equation*}
	H_0^1(\Om;w),\quad H^1(\Om;w),\quad H^2(\Om;w)
\end{equation*}
are Hilbert spaces; see \cite{GC,Heinonen}.

Finally, set
\begin{equation*}
	D(\mcA)=H^2(\Om;w)\cap H_0^1(\Om;w). 
\end{equation*}

\begin{lemma}\label{03.11.L1}
	Let $\al\in (0,1)$. Then for all $u\in H_0^1(\Om;w)$ we have
	\begin{equation*}
		\int_\Om r^{\al-2}u^2\df z\leq \f{4}{(1-\al)^2}\int_\Om r^\al (\pt_{r}u)^2\df z. 
	\end{equation*}
\end{lemma}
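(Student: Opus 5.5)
This is a one-dimensional Hardy inequality in $r$, applied for each fixed $\theta$. We first prove it for $u\in C_0^\iy(\Om)$ and then pass to $H_0^1(\Om;w)$ by density.

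\textbf{Smooth case.} Fix $u\in C_0^\iy(\Om)$ and $\theta\in(0,1)$, and write $v(r)=u(\theta,r)$. Then $v$ is smooth with compact support in $(0,1)$, so every boundary term below vanishes. Since $r^{\al-2}=\f{1}{\al-1}\f{\df}{\df r}r^{\al-1}$, integration by parts gives
\begin{equation*}
	\int_0^1 r^{\al-2}v^2\df r=-\f{2}{\al-1}\int_0^1 r^{\al-1}v v'\df r=\f{2}{1-\al}\int_0^1 \big(r^{\f{\al-2}{2}}v\big)\big(r^{\f{\al}{2}}v'\big)\df r .
\end{equation*}
The step uses $\al<1$ to fix the sign and to ensure the constant is finite. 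Cauchy--Schwarz bounds the right-hand side by
\begin{equation*}
	\f{2}{1-\al}\Big(\int_0^1 r^{\al-2}v^2\df r\Big)^{1/2}\Big(\int_0^1 r^{\al}(v')^2\df r\Big)^{1/2}.
\end{equation*}
The left-hand side is finite because $v$ vanishes near $r=0$. Dividing by its square root, when nonzero, and squaring gives the one-dimensional inequality with constant $4/(1-\al)^2$. Integrating in $\theta$ over $(0,1)$ gives the lemma for $u\in C_0^\iy(\Om)$.

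\textbf{Density.} Take $u\in H_0^1(\Om;w)$ and choose $u_n\in C_0^\iy(\Om)$ with $u_n\to u$ in $H^1(\Om;w)$. Then $u_n\to u$ in $L^2(\Om)$ and $r^{\al/2}\pt_r u_n\to r^{\al/2}\pt_r u$ in $L^2(\Om)$. Passing to a subsequence, $u_n\to u$ almost everywhere. Fatou's lemma then gives
\begin{equation*}
	\int_\Om r^{\al-2}u^2\df z\le\liminf_{n}\int_\Om r^{\al-2}u_n^2\df z\le \f{4}{(1-\al)^2}\lim_{n}\int_\Om r^\al(\pt_r u_n)^2\df z=\f{4}{(1-\al)^2}\int_\Om r^\al(\pt_r u)^2\df z.
\end{equation*}

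\textbf{Main obstacle.} The only delicate point is the boundary term at $r=0$ in the integration by parts. Working first with compactly supported smooth functions removes it entirely. The Fatou argument then avoids needing any a priori control of $r^{\al-2}u^2$ for general $u\in H_0^1(\Om;w)$. This is why the inequality is stated on $H_0^1(\Om;w)$, the closure of $C_0^\iy(\Om)$, where the Dirichlet condition holds on $\Ga_2^0$ as well.
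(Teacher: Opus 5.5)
Your proof is correct, but it takes a different route from the paper. The paper never integrates by parts. For $u\in C_0^\iy(\Om)$ it writes $u(\theta,r)=\int_0^r\pt_r u(\theta,s)\df s$ and applies Cauchy--Schwarz with an auxiliary weight $s^{\be}$, $\be\in(\al,1)$. This gives the pointwise bound $|u(\theta,r)|^2\le \f{r^{1-\be}}{1-\be}\int_0^r s^\be(\pt_r u)^2\df s$. The paper then multiplies by $r^{\al-2}$ and exchanges the order of integration, which yields the constant $\f{1}{(1-\be)(\be-\al)}$; choosing $\be=\f{1+\al}{2}$ minimizes it to $\f{4}{(1-\al)^2}$.

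Your argument (integrate by parts against $(r^{\al-1})'/(\al-1)$, apply Cauchy--Schwarz, absorb) is shorter and gives the same constant with no parameter to optimize. However, it needs the left-hand side to be finite in advance so that you can divide by it. That is why restricting first to compactly supported smooth $v$ is essential in your version. The paper's direct estimate never needs $\int r^{\al-2}u^2<\iy$ beforehand, and it would apply verbatim to any $u$ that is absolutely continuous in $r$ with $u(\theta,0)=0$.

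On the other hand, your density step is more careful than the paper's. The quantity $\int_\Om r^{\al-2}u^2\df z$ is not part of the $H^1(\Om;w)$-norm, so ``follows by density'' does require an argument. Your extraction of an a.e.\ convergent subsequence followed by Fatou's lemma supplies exactly that. Applying the smooth inequality to $u_n-u_m$ to get a Cauchy sequence in the weighted $L^2$ space would be an alternative.
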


\begin{proof}
	Since $C_0^\iy(\Om)$ is dense in $H_0^1(\Om;w)$, it is enough to prove the estimate for $u\in C_0^\iy(\Om)$. For each fixed $\theta\in (0,1)$, using $u(\theta,0)=0$, we write
	\[
		u(\theta,r)=\int_0^r \pt_r u(\theta,s)\df s.
	\]
	Here the identity is used only for smooth approximants. The statement for general $u\in H_0^1(\Om;w)$ then follows by density, so no additional pointwise trace property is required at this stage.
	By Cauchy-Schwarz inequality, for any $\be\in (\al,1)$ we have
	\[
		|u(\theta,r)|^2\leq \left(\int_0^r s^\be(\pt_r u(\theta,s))^2\df s\right)\left(\int_0^r s^{-\be}\df s\right)
		=\f{1}{1-\be}\left(\int_0^r s^\be(\pt_r u(\theta,s))^2\df s\right)r^{1-\be}.
	\]
	Multiplying by $r^{\al-2}$ and integrating in $r$ and $\theta$, we obtain
	\begin{equation*}
		\begin{split}
			\int_\Om r^{\al-2}u^2\df z
			&\leq \f{1}{1-\be}\int_0^1\int_0^1 r^{\al-\be-1}\left(\int_0^r s^\be(\pt_r u(\theta,s))^2\df s\right)\df r\df \theta\\
			&=\f{1}{1-\be}\int_0^1\int_0^1\left(\int_s^1 r^{\al-\be-1}\df r\right)s^\be(\pt_r u(\theta,s))^2\df s\df \theta\\
			&\leq \f{1}{(1-\be)(\be-\al)}\int_\Om r^\al(\pt_r u)^2\df z.
		\end{split}
	\end{equation*}
	Choosing $\be=\f{1+\al}{2}$ gives the constant $\f{4}{(1-\al)^2}$. The conclusion for general $u\in H_0^1(\Om;w)$ follows by density.
\end{proof}

\begin{remark}\label{03.11.R1}
	Let $\al\in (0,1)$. Then for all $u\in H_0^1(\Om;w)$ we have 
	\begin{equation}\label{03.11.1}
		\int_\Om u^2\df z=\int_\Om r^{2-\al}r^{\al-2}u^2\df z\leq \f{4}{(1-\al)^2}\int_\Om r^\al (\pt_{r}u)^2\df z. 
	\end{equation}
	Hence
	\begin{equation*}
		(u,v)_{H_0^1(\Om;w)}=\int_\Om A\nabla u\cdot \nabla v\df z,\quad \|u\|_{H_0^1(\Om;w)}=(u,u)_{H_0^1(\Om;w)}^\f{1}{2}
	\end{equation*}
	defines an inner product and an equivalent norm on $H_0^1(\Om;w)$. 
\end{remark}

\begin{lemma}\label{03.11.L2}
	Let $\al\in (0,1)$. Then the embedding
	\[
		H_0^1(\Om;w)\hookrightarrow L^2(\Om)
	\]
	is compact.
\end{lemma}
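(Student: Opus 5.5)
The plan is to split $\Om$ into a thin strip near the degenerate side and its complement. On the complement the weight is bounded below, so the classical Rellich theorem applies. On the thin strip, the Hardy inequality of Lemma \ref{03.11.L1} makes the $L^2$ mass uniformly small.

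Let $(u_n)$ be a bounded sequence in $H_0^1(\Om;w)$, say $\|u_n\|_{H_0^1(\Om;w)}\le M$. We must extract a subsequence that is Cauchy in $L^2(\Om)$.

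\emph{Step 1: uniform smallness near $r=0$.} For $\de\in(0,1)$, set $S_\de=\{z\in\Om\colon r<\de\}$. On $S_\de$ we have $1=r^{2-\al}r^{\al-2}\le \de^{2-\al}r^{\al-2}$. Hence Lemma \ref{03.11.L1} gives
\begin{equation*}
	\int_{S_\de}u_n^2\df z\le \de^{2-\al}\int_\Om r^{\al-2}u_n^2\df z\le \f{4\de^{2-\al}}{(1-\al)^2}\int_\Om r^\al(\pt_r u_n)^2\df z\le \f{4M^2\de^{2-\al}}{(1-\al)^2},
\end{equation*}
which tends to $0$ as $\de\to0^+$, uniformly in $n$.

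\emph{Step 2: compactness away from $r=0$.} On $\Om_\de=\{r>\de\}$ we have $r^\al\ge\de^\al$. Therefore
\begin{equation*}
	\|u_n\|_{H^1(\Om_\de)}^2\le \int_\Om u_n^2\df z+\de^{-\al}\int_\Om A\nabla u_n\cdot\nabla u_n\df z\le C(\de,\al)M^2,
\end{equation*}
where the $L^2$ term is controlled by \eqref{03.11.1}. So $(u_n|_{\Om_\de})$ is bounded in the unweighted space $H^1(\Om_\de)$. Since $\Om_\de=(0,1)\ts(\de,1)$ is a Lipschitz rectangle, the Rellich–Kondrachov theorem gives a subsequence converging in $L^2(\Om_\de)$.

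One technical point must be checked here. Elements of $H_0^1(\Om;w)$ have distributional gradients in $L^2_{\rm loc}$ on $\Om_\de$, and the restriction lies in $H^1(\Om_\de)$. This follows by density from $C_0^\iy(\Om)$: the map $u\mapsto u|_{\Om_\de}$ is continuous from $H_0^1(\Om;w)$ to $H^1(\Om_\de)$ by the same bound.

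\emph{Step 3: diagonal argument.} Take $\de_k=1/k$. Extract nested subsequences converging in $L^2(\Om_{\de_k})$, and take the diagonal subsequence $(u_{n_j})$. For $\e>0$, choose $k$ such that $2\cdot\f{4M^2\de_k^{2-\al}}{(1-\al)^2}<\e^2/4$. Then
\begin{equation*}
	\|u_{n_i}-u_{n_j}\|_{L^2(\Om)}^2\le \|u_{n_i}-u_{n_j}\|_{L^2(\Om_{\de_k})}^2+2\|u_{n_i}\|_{L^2(S_{\de_k})}^2+2\|u_{n_j}\|_{L^2(S_{\de_k})}^2 .
\end{equation*}
The last two terms are together below $\e^2/2$ by Step 1. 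The first term is below $\e^2/2$ for large $i,j$ by Step 2. Hence the diagonal sequence is Cauchy in $L^2(\Om)$, and the embedding is compact.

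The only genuinely nontrivial ingredient is the uniform tail estimate in Step 1. It rests entirely on Lemma \ref{03.11.L1} and on the positive power $2-\al>0$, so no real obstacle remains. The main care needed is the justification in Step 2 that restrictions lie in $H^1(\Om_\de)$ with a uniform bound, which is handled by density.
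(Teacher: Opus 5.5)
Your proof is correct and follows the same route as the paper. Lemma \ref{03.11.L1} makes the $L^2$ mass in the strip $\{r<\de\}$ uniformly $O(\de^{2-\al})$, and on $\Om_\de$ the weight is bounded below, so Rellich applies. The only difference is bookkeeping: the paper passes to a weak limit in $H_0^1(\Om;w)$ and reduces to $u=0$, whereas your diagonal extraction with the Cauchy criterion avoids the paper's somewhat loose $\e$-dependent subsequence step.
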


\begin{proof}
	Let $\{u_n\}_{n\in\N}$ be bounded in $H_0^1(\Om;w)$. Since $H_0^1(\Om;w)$ is a Hilbert space, after passing to a subsequence we may assume
	\[
		u_n\rightharpoonup u \quad \mbox{weakly in } H_0^1(\Om;w).
	\]
	It is enough to prove that $u_n\to u$ strongly in $L^2(\Om)$; equivalently, replacing $u_n$ by $u_n-u$, we may assume $u=0$ and show $u_n\to 0$ in $L^2(\Om)$.
	
	Let $\e>0$. For $\de\in (0,\f18)$, since $\Om-\Om_\de=(0,1)\ts (0,\de)$, Lemma \ref{03.11.L1} implies
	\begin{equation*}
		\begin{split}
			\int_{\Om-\Om_\de}u_n^2\df z
			&=\int_{\Om-\Om_\de}r^{2-\al}r^{\al-2}u_n^2\df z
			\leq \de^{2-\al}\int_\Om r^{\al-2}u_n^2\df z\\
			&\leq C\de^{2-\al}\|u_n\|_{H_0^1(\Om;w)}^2.
		\end{split}
	\end{equation*}
	Since $\{u_n\}$ is bounded in $H_0^1(\Om;w)$, we may choose $\de_0\in (0,\f18)$ so small that
	\[
		\int_{\Om-\Om_{\de_0}}u_n^2\df z<\f{\e^2}{4}
	\]
	for all $n\in\N$.
	
	On the truncated domain $\Om_{\de_0}$, the weight $r^\al$ is bounded above and below by positive constants. Hence the norms of $H^1(\Om_{\de_0})$ and $H^1(\Om_{\de_0};w)$ are equivalent there. In particular, $\{u_n|_{\Om_{\de_0}}\}$ is bounded in $H^1(\Om_{\de_0})$, and since $u_n\rightharpoonup 0$ weakly in $H_0^1(\Om;w)$, its restriction converges weakly to $0$ in $H^1(\Om_{\de_0})$. By the Rellich theorem, after passing to a further subsequence if necessary, we have
	\[
		u_n\to 0 \quad \mbox{strongly in } L^2(\Om_{\de_0}).
	\]
	Therefore there exists $n_\e\in\N$ such that
	\[
		\int_{\Om_{\de_0}}u_{n_\e}^2\df z<\f{\e^2}{4}.
	\]
	Combining the estimates on $\Om_{\de_0}$ and $\Om-\Om_{\de_0}$, we get
	\[
		\|u_{n_\e}\|_{L^2(\Om)}<\e.
	\]
	This proves that every bounded sequence in $H_0^1(\Om;w)$ admits a subsequence converging strongly in $L^2(\Om)$.
\end{proof}

\begin{definition}\label{03.11.D1}
	Let $\vp^0\in H_0^1(\Om;w)$, $\vp^1\in L^2(\Om)$ and $f\in L^2(Q)$. We call
	\begin{equation*}
		\vp\in L^2(0,T; H_0^1(\Om;w))\cap H^1(0,T; L^2(\Om))\cap H^2(0,T; H^{-1}(\Om;w))
	\end{equation*}
	a weak solution of \eqref{01.10.1} with respect to $(\vp^0,\vp^1,f)$ if
	
	(i) for each $v\in H_0^1(\Om;w)$ and a.e.~$t\in [0,T]$, we have
	\begin{equation*}
		\lg\pt_{tt}\vp, v\rg_{H^{-1}(\Om;w), H_0^1(\Om;w)}+(\vp, v)_{H_0^1(\Om;w)}=(f,v)_{L^2(\Om)}, 
	\end{equation*}
	
	(ii) $\vp(0)=\vp^0$ and $\pt_t\vp(0)=\vp^1$. 
\end{definition}

\begin{theorem}\label{03.11.T1}
	Let $\vp^0\in H_0^1(\Om;w)$, $\vp^1\in L^2(\Om)$ and $f\in L^2(Q)$. Then problem \eqref{01.10.1} admits a unique weak solution
	\begin{equation*}
		\vp\in L^2(0,T; H_0^1(\Om;w))\cap H^1(0,T; L^2(\Om))\cap H^2(0,T; H^{-1}(\Om;w))
	\end{equation*}
	with data $(\vp^0,\vp^1, f)$, and
		\begin{equation}\label{03.12.1}
			\begin{split}
				&\esssup_{t\in [0,T]}\left(\|\vp(t)\|_{H_0^1(\Om;w)}+\|\pt_t\vp(t)\|_{L^2(\Om)}\right)+\|\pt_{tt}\vp\|_{L^2(0,T; H^{-1}(\Om;w))}\\
				&\hspace{4.5mm}+\left\|\pt_r\vp\right\|_{L^2(0,T; L^2(\Ga_2^1))}\\
				&\leq C\left(\|\vp^0\|_{H_0^1(\Om;w)}+\|\vp^1\|_{L^2(\Om)}+\|f\|_{L^2(Q)}\right), 
			\end{split}
	\end{equation}
	where the positive constant $C$ depends only on $\al$ and $T$. 
	
	Furthermore, if $\vp^0\in D(\mcA)$, $\vp^1\in H_0^1(\Om;w)$, and $f\in H^1(0,T; L^2(\Om))$, then 
		\begin{equation}\label{03.12.2}
			\begin{split}
				&\esssup_{t\in [0,T]}\left(\|\vp(t)\|_{H^2(\Om_\f{1}{8}^\sharp)}+\|\vp(t)\|_{D(\mcA)}+\|\pt_t\vp(t)\|_{H_0^1(\Om;w)}+\|\pt_{tt}\vp(t)\|_{L^2(\Om)}\right)\\
				&\leq C\left(\|\vp^0\|_{D(\mcA)}+\|\vp^1\|_{H_0^1(\Om;w)}+\|f\|_{H^1(0,T; L^2(\Om))}\right), 
			\end{split}
		\end{equation}
	where the positive constant $C$ depends only on $\al$ and $T$. 
\end{theorem}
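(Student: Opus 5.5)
We prove existence, uniqueness and the energy bounds by the Faedo--Galerkin method, and the hidden trace estimate on $\Ga_2^1$ by a multiplier argument localized near $r=1$.

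\textbf{Energy part.} By Lemma \ref{03.11.L2} and Remark \ref{03.11.R1}, $\mcA$ with domain $D(\mcA)$ is the self-adjoint positive operator associated with the coercive form $(\cdot,\cdot)_{H_0^1(\Om;w)}$. It has compact resolvent, so there is an orthonormal basis $\{e_k\}$ of $L^2(\Om)$ consisting of eigenfunctions, and this basis is orthogonal in $H_0^1(\Om;w)$. We solve the Galerkin ODE systems on $\Span\{e_1,\dots,e_m\}$. Testing with $\pt_t\vp_m$ and applying Gronwall's lemma gives uniform bounds on $\sup_t(\|\vp_m\|_{H_0^1(\Om;w)}+\|\pt_t\vp_m\|_{L^2})$. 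The equation itself then bounds $\pt_{tt}\vp_m$ in $L^2(0,T;H^{-1}(\Om;w))$. We pass to weak-$*$ limits to obtain a weak solution. The initial data are attained thanks to $\vp\in C([0,T];L^2)$ and $\pt_t\vp\in C([0,T];H^{-1}(\Om;w))$.

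\textbf{Uniqueness.} We use the classical Lions--Magenes (Ladyzhenskaya) trick: for zero data, test with $\int_t^s\vp(\tau)\df\tau$.

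\textbf{Higher regularity.} For \eqref{03.12.2} we differentiate the Galerkin system in $t$. The function $\pt_t\vp_m$ solves the same equation with source $\pt_tf$ and initial data $(\vp^1,\ f(0)-\mcA\vp^0)$; the latter lies in $L^2$ because $f\in H^1(0,T;L^2)\subset C([0,T];L^2)$. The energy estimate therefore bounds $\pt_t\vp$ in $H_0^1(\Om;w)$ and $\pt_{tt}\vp$ in $L^2$. Writing $\mcA\vp=f-\pt_{tt}\vp$ then gives the $D(\mcA)$ bound. For the $H^2(\Om^\sharp_{1/8})$ bound, note that on $\Om_{1/16}$ the operator is uniformly elliptic with smooth coefficients. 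The set $\Om^\sharp_{1/8}$ touches the boundary only along the flat pieces $r=1$ and (away from corners) $\theta$-sides, where the Dirichlet condition holds. Standard local $H^2$ regularity up to flat boundary (difference quotients in the tangential direction, then solving for $\pt_{rr}\vp$ or $\pt_{\theta\theta}\vp$) applied to $\mcA\vp=f-\pt_{tt}\vp\in L^2$ gives
\[
\|\vp\|_{H^2(\Om^\sharp_{1/8})}\le C\big(\|\mcA\vp\|_{L^2}+\|\vp\|_{H^1(\Om_{1/16};w)}\big).
\]
A cutoff in $\theta$ is needed to stay away from the corners.

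\textbf{Hidden trace.} This is the main step. By density, and using the continuity of the energy norms with respect to data, it suffices to treat strong solutions. Take the multiplier $q\,\pt_r\vp$ with $q=q(r)$ smooth, $q=0$ for $r\le 1/2$ and $q(1)=1$. Working only where $r\ge1/2$, the operator is nondegenerate and the integrations by parts are justified by interior/flat-boundary $H^2$ regularity. Note that $H^2$ near the corners $(0,1)$, $(1,1)$ is not given by \eqref{03.12.2}; to handle this we also smooth in $\theta$, or approximate using the eigenfunction expansion (separation of variables in $\theta$ with $\sin(k\pi\theta)$, which makes the corners harmless because the weight depends only on $r$). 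Integrating $(\pt_{tt}\vp-\pt_{\theta\theta}\vp-\pt_r(r^\al\pt_r\vp))q\pt_r\vp$ over $Q$ gives the boundary term
\[
\tfrac12\int_0^T\!\!\int_{\Ga_2^1}(\pt_r\vp)^2,
\]
using $\vp=\pt_t\vp=\pt_\theta\vp=0$ on $r=1$. The remaining terms are bulk terms bounded by the energy, and the terms at $t=0,T$, namely $\int\pt_t\vp\,q\pt_r\vp$, are bounded by the energy as well. The lateral boundary terms at $\theta=0,1$ vanish: there $\pt_r\vp=0$ since $\vp=0$ along those sides, and this holds rigorously in the sine expansion. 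Combining with the energy estimate yields \eqref{03.12.1}.

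The expected obstacle is justifying these integrations by parts near the corners $r=1$, $\theta\in\{0,1\}$. The sine-series reduction in $\theta$ to one-dimensional problems in $r$ is the route I would try first.
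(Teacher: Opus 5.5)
Your proposal is correct and follows the paper's own route: an eigenfunction Galerkin scheme with energy bounds, differentiation in $t$ plus local uniformly elliptic regularity for \eqref{03.12.2}, and the radial multiplier $\rho(r)\pt_r\vp$ vanishing near $r=0$ for the trace on $\Ga_2^1$, extended to general data by density. Your Ladyzhenskaya-type uniqueness argument and the sine-series justification of the integrations by parts near the upper corners make rigorous two steps the paper only sketches; one small slip is that $\Om^\sharp_{1/8}=(\tfrac18,\tfrac78)\times(\tfrac18,1)$ meets $\pt\Om$ only along $r=1$, not along the $\theta$-sides.
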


\begin{proof}
	We sketch the standard Galerkin construction and record the additional boundary trace estimate needed later.
	
	{\it Step 1: Galerkin approximation and weak solvability.}
	
	Define
	\[
		a(u,v)=\int_\Om A\nabla u\cdot \nabla v\df z,\qquad u,v\in H_0^1(\Om;w).
	\]
		By Remark \ref{03.11.R1}, $a(\cdot,\cdot)$ is continuous and coercive on $H_0^1(\Om;w)$. By Lemma \ref{03.11.L2}, the embedding $H_0^1(\Om;w)\hookrightarrow L^2(\Om)$ is compact. Therefore the spectral theorem for compactly embedded coercive forms yields a sequence of eigenpairs $\{(\la_k,e_k)\}_{k\in\N}\subset (0,+\iy)\times D(\mcA)$ such that
	\[
		\mcA e_k=\la_k e_k,\qquad (e_k,e_j)_{L^2(\Om)}=\de_{kj},
	\]
	and $\{e_k\}_{k\in\N}$ is an orthonormal basis of $L^2(\Om)$. In particular,
	\[
		a(e_k,e_j)=\la_k\de_{kj}.
	\]
	For each $n\in\N$, we seek an approximate solution of the form
	\[
		\vp_n(t)=\sum_{k=1}^n d_k^n(t)e_k,
	\]
	where the coefficients solve
	\[
		(d_k^n)''(t)+\la_k d_k^n(t)=(f(t),e_k)_{L^2(\Om)},\qquad d_k^n(0)=(\vp^0,e_k)_{L^2(\Om)},\qquad (d_k^n)'(0)=(\vp^1,e_k)_{L^2(\Om)}.
	\]
	This finite-dimensional system has a unique solution on $[0,T]$. Multiplying the $k$-th equation by $(d_k^n)'(t)$, summing over $k=1,\ldots,n$, and integrating in time, we obtain
	\begin{equation*}
		\esssup_{t\in [0,T]}\left(\|\vp_n(t)\|_{H_0^1(\Om;w)}^2+\|\pt_t\vp_n(t)\|_{L^2(\Om)}^2\right)
		\leq C\left(\|\vp^0\|_{H_0^1(\Om;w)}^2+\|\vp^1\|_{L^2(\Om)}^2+\|f\|_{L^2(Q)}^2\right).
	\end{equation*}
	Moreover, from the Galerkin equation
	\[
		\langle \pt_{tt}\vp_n,v\rangle_{H^{-1}(\Om;w),H_0^1(\Om;w)}+a(\vp_n,v)=(f,v)_{L^2(\Om)}\qquad \forall v\in \Span\{e_1,\ldots,e_n\},
	\]
	we infer
	\begin{equation*}
		\|\pt_{tt}\vp_n\|_{L^2(0,T;H^{-1}(\Om;w))}
		\leq C\left(\|\vp^0\|_{H_0^1(\Om;w)}+\|\vp^1\|_{L^2(\Om)}+\|f\|_{L^2(Q)}\right).
	\end{equation*}
	Hence, after extracting a subsequence if necessary, we have
	\[
		\vp_n\rightharpoonup \vp \mbox{ weakly-* in }L^\iy(0,T;H_0^1(\Om;w)),\quad \pt_t\vp_n\rightharpoonup \pt_t\vp \mbox{ weakly-* in }L^\iy(0,T;L^2(\Om)),
	\]
	and
	\[
		\pt_{tt}\vp_n\rightharpoonup \pt_{tt}\vp \mbox{ weakly in }L^2(0,T;H^{-1}(\Om;w)).
	\]
		Passing to the limit in the Galerkin formulation shows that $\vp$ is a weak solution of \eqref{01.10.1} in the sense of Definition \ref{03.11.D1}. Uniqueness follows from the same energy identity applied to the difference of two weak solutions. Therefore the first three terms in \eqref{03.12.1} satisfy the stated estimate, and
	\[
		\vp\in L^2(0,T; H_0^1(\Om;w))\cap H^1(0,T; L^2(\Om))\cap H^2(0,T; H^{-1}(\Om;w)).
	\]
	
	{\it Step 2: Higher regularity.}
	
	Assume now that $\vp^0\in D(\mcA)$, $\vp^1\in H_0^1(\Om;w)$ and $f\in H^1(0,T; L^2(\Om))$. Differentiating \eqref{01.10.1} with respect to $t$, we obtain that $\Psi=\pt_t\vp$ satisfies the same hyperbolic equation with source $\pt_t f$ and initial data
	\[
		\Psi(0)=\vp^1,\qquad \pt_t\Psi(0)=f(0)-\mcA \vp^0\in L^2(\Om).
	\]
	Applying Step 1 to $\Psi$, we get
	\begin{equation*}
		\begin{split}
			&\esssup_{t\in [0,T]}\|\pt_t\vp(t)\|_{H_0^1(\Om;w)}
			+\esssup_{t\in [0,T]}\|\pt_{tt}\vp(t)\|_{L^2(\Om)}\\
			&\leq C\left(\|\vp^0\|_{D(\mcA)}+\|\vp^1\|_{H_0^1(\Om;w)}
			+\|f\|_{H^1(0,T;L^2(\Om))}\right).
		\end{split}
	\end{equation*}
	Consequently,
	\[
		\mcA\vp=\pt_{tt}\vp-f\in L^\iy(0,T; L^2(\Om)),
	\]
		and therefore $\vp\in L^\iy(0,T; D(\mcA))$. Away from the degenerate boundary $r=0$ and away from the four corner points, the operator is uniformly elliptic and the boundary is locally smooth. Hence a standard cut-off argument combined with interior and local boundary elliptic regularity yields
		\[
			\vp\in L^\iy(0,T; H^2(\Om_{\f{1}{8}}^\sharp)).
		\]
			This establishes \eqref{03.12.2}.
	
	{\it Step 3: Boundary trace estimate on $\Ga_2^1$.}
	
	We first consider smooth data and the associated Galerkin approximations. Let $\rho\in C^\iy([0,1])$ satisfy $\rho=0$ near $r=0$ and $\rho=1$ near $r=1$. Multiplying the regularized equation by $\rho(r)\pt_r\vp$ and integrating over $Q$, the boundary terms on $\{\theta=0\}\cup\{\theta=1\}$ vanish because the multiplier is purely radial, while the contributions near $r=0$ disappear since $\rho$ vanishes there. After integrating by parts, using the energy bounds from Step 1 and estimate \eqref{03.11.1} to control the lower-order terms, we obtain
	\[
		\|\pt_r\vp\|_{L^2(0,T;L^2(\Ga_2^1))}\leq C\Big(\|\vp^0\|_{H_0^1(\Om;w)}+\|\vp^1\|_{L^2(\Om)}+\|f\|_{L^2(Q)}\Big),
	\]
	with $C$ independent of the Galerkin dimension. Passing to the limit by density gives the fourth term in \eqref{03.12.1} for general data.
	
	Combining Steps 1--3 completes the proof.
\end{proof}
 
\begin{lemma}[Stationary weighted elliptic estimate]\label{02.13.L2}
	Let $\al\in(0,1)$. There exists a constant $C>0$, depending only on $\al$, such that for every $u\in D(\mcA)$,
	\begin{equation}\label{02.13.1}
		\begin{split}
			&\|\pt_{\theta\theta}u\|_{L^2(\Om)}^2+\|r^{\al/2}\pt_{\theta r}u\|_{L^2(\Om)}^2+\|r^{1+\al/2}\pt_{rr}u\|_{L^2(\Om)}^2\\
			&\hspace{12mm}+\|\pt_r(r^\al\pt_r u)\|_{L^2(\Om)}^2
			\leq C\|\mcA u\|_{L^2(\Om)}^2 .
		\end{split}
	\end{equation}
\end{lemma}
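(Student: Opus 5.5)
The plan is to prove \eqref{02.13.1} first for smooth functions vanishing near $\pt\Om$, or for truncated/regularized approximants, and then pass to general $u\in D(\mcA)$ by density in the graph norm. Write $f=\mcA u=-\pt_{\theta\theta}u-\pt_r(r^\al\pt_r u)$.

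\textbf{Step 1: the key integration-by-parts identity.} Expand the square:
\begin{equation*}
\|f\|_{L^2(\Om)}^2=\|\pt_{\theta\theta}u\|^2+\|\pt_r(r^\al\pt_ru)\|^2+2\int_\Om \pt_{\theta\theta}u\,\pt_r(r^\al\pt_ru)\df z .
\end{equation*}
For the cross term, integrate by parts once in $\theta$ and once in $r$. For $u$ smooth with compact support (or vanishing on $\theta=0,1$), this gives
\begin{equation*}
\int_\Om \pt_{\theta\theta}u\,\pt_r(r^\al\pt_ru)\df z=\int_\Om r^\al(\pt_{\theta r}u)^2\df z\ \ge 0 .
\end{equation*}
The resulting boundary terms vanish: $\pt_\theta u=0$ on $r=0,1$, $u=0$ on $\theta=0,1$, and the extra factor $r^\al$ kills the contribution at $r=0$. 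Hence
\begin{equation*}
\|\pt_{\theta\theta}u\|^2+\|\pt_r(r^\al\pt_ru)\|^2+2\|r^{\al/2}\pt_{\theta r}u\|^2=\|\mcA u\|^2 .
\end{equation*}
This already yields three of the four terms with $C=1$.

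\textbf{Step 2: the term $r^{1+\al/2}\pt_{rr}u$.} Write $\pt_r(r^\al\pt_ru)=r^\al\pt_{rr}u+\al r^{\al-1}\pt_ru$. Then
\begin{equation*}
r^{1+\al/2}\pt_{rr}u=r^{1-\al/2}\pt_r(r^\al\pt_ru)-\al r^{\al/2}\pt_ru .
\end{equation*}
The first piece is bounded by $\|\pt_r(r^\al\pt_ru)\|$, since $r\le1$. The second is bounded by $\|u\|_{H_0^1(\Om;w)}$. Testing the equation with $u$ and using \eqref{03.11.1} gives
\begin{equation*}
\|u\|_{H_0^1(\Om;w)}^2=(\mcA u,u)\le \|\mcA u\|\,\|u\|\le C\|\mcA u\|\,\|u\|_{H_0^1(\Om;w)},
\end{equation*}
so $\|u\|_{H_0^1(\Om;w)}\le C\|\mcA u\|$. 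This closes the estimate with $C$ depending only on $\al$.

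\textbf{Step 3: justification for $u\in D(\mcA)$ (the main obstacle).} The identity in Step 1 needs the boundary terms to make sense and vanish for a general $u\in D(\mcA)$, where no $H^2$ regularity up to $r=0$ or up to the corners is known a priori. I would approximate $u$ by its spectral truncations $u_n=\sum_{k\le n}(u,e_k)e_k$. These converge to $u$ in the graph norm, so it suffices to prove the estimate for each eigenfunction combination.

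For those, one option is to use the eigenbasis in the separated form $\sin(m\pi\theta)g(r)$. The $\theta$-direction is a Dirichlet Laplacian that commutes with $\mcA$, so separation of variables applies. The estimate then reduces to one-dimensional weighted Sturm--Liouville problems for $g$, where the boundary behavior is explicit.

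On each $\sin(m\pi\theta)$ mode, the cross term becomes $m^2\pi^2\int_0^1 r^\al (g')^2\df r$, after one integration by parts in $r$. The boundary terms $r^\al g'g$ vanish at $r=1$ because $g(1)=0$. At $r=0$ they vanish because, in the weakly degenerate regime $\al\in(0,1)$, $r^\al g'$ is bounded while $g(r)\to0$; this follows from the Hardy estimate of Lemma \ref{03.11.L1}.

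Fatou's lemma, or lower semicontinuity of the weighted norms, then transfers the bound to the limit $u$.
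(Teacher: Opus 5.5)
Your proposal is correct and is essentially the paper's argument. The paper expands in the sine basis in $\theta$ and obtains for each mode the identity $\|g_n\|^2=\mu_n^2\|u_n\|^2+\|\pt_r(r^\al u_n')\|^2+2\mu_n\int_0^1 r^\al|u_n'|^2\,dr$, which is exactly what your cross-term computation gives; it then treats $r^{1+\al/2}\pt_{rr}u$ through the same decomposition $r^{1-\al/2}\pt_r(r^\al\pt_r u)-\al r^{\al/2}\pt_r u$. Your spectral-truncation justification in the separated eigenbasis, with the explicit check that $r^\al g'g\to 0$ at $r=0$ for $\al<1$, is a more careful version of the paper's brief appeal to density; bounding $r^{\al/2}\pt_r u$ by testing with $u$ plus the Hardy bound is equivalent to the paper's reading it off from $2\mu_n\int r^\al|u_n'|^2$ with $\mu_n\geq\pi^2$.
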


\begin{proof}
	We give the argument in order to make explicit why the estimate is compatible with the corners of the rectangle. By density, it is enough to consider smooth functions satisfying the homogeneous Dirichlet condition. Let $g=\mcA u$ and expand in the sine basis in the $\theta$-variable:
	\[
		u(\theta,r)=\sum_{n\geq 1}u_n(r)e_n(\theta),\qquad 
		g(\theta,r)=\sum_{n\geq 1}g_n(r)e_n(\theta),\qquad e_n(\theta)=\sqrt2\sin(n\pi\theta).
	\]
	With $\mu_n=(n\pi)^2$, each coefficient satisfies
	\[
		g_n=\mu_n u_n-\pt_r(r^\al u_n').
	\]
	Multiplying by $u_n$ and integrating by parts in $r$ gives
	\[
		\mu_n\|u_n\|_{L^2(0,1)}^2+\int_0^1r^\al |u_n'|^2\,dr
		=\int_0^1 g_nu_n\,dr .
	\]
	Moreover, another integration by parts yields the exact identity
	\[
		\|g_n\|_{L^2(0,1)}^2
		=\mu_n^2\|u_n\|_{L^2(0,1)}^2+\|\pt_r(r^\al u_n')\|_{L^2(0,1)}^2
		+2\mu_n\int_0^1r^\al |u_n'|^2\,dr .
	\]
	Summing over $n$ gives the bounds for $\pt_{\theta\theta}u$, $r^{\al/2}\pt_{\theta r}u$, and $\pt_r(r^\al\pt_r u)$. Finally,
	\[
		r^{1+\al/2}u_n''=r^{1-\al/2}\pt_r(r^\al u_n')-\al r^{\al/2}u_n',
	\]
	and $r^{1-\al/2}\leq 1$ on $(0,1)$. Hence the already obtained estimates also control $r^{1+\al/2}u_{rr}$. This proves \eqref{02.13.1}.
\end{proof}

\begin{corollary}[Global weighted second-order regularity]\label{02.14.C1}
	Let $\vp^0\in D(\mcA)$, $\vp^1\in H_0^1(\Om;w)$, and $f\in H^1(0,T;L^2(\Om))$. Let $\vp$ denote the solution of \eqref{01.10.1} associated with $(\vp^0,\vp^1,f)$. Then 
	\begin{equation}\label{02.15.2}
		\pt_{\theta \theta }\vp\in L^2(Q), \  r^\f{\al}{2}\pt_{\theta r}\vp\in L^2(Q), \  r^{1+\f{\al}{2}}\pt_{rr}\vp\in L^2(Q), \mbox{ and }\pt_r(r^\al\pt_r\vp)\in L^2(Q).
	\end{equation}
	Moreover, there exists a positive constant $C$, depending only on $\al$, such that 
	\begin{equation}\label{02.15.1}
		\begin{split} 
			&\iint_Q \left[(\pt_{\theta\theta}\vp)^2+r^\al (\pt_{\theta r}\vp)^2+r^{2+\al}(\pt_{rr}\vp)^2\right]\df z\df t\\
			&\leq C\left(\|\vp^0\|_{D(\mcA)}^2+\|\vp^1\|_{H_0^1(\Om;w)}^2+\|f\|_{H^1(0,T; L^2(\Om))}^2\right). 
		\end{split} 
	\end{equation}
\end{corollary}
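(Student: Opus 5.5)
The plan is to apply Lemma \ref{02.13.L2} pointwise in time and then integrate over $(0,T)$, with Theorem \ref{03.11.T1} supplying the needed bound on $\mcA\vp$. By the second part of Theorem \ref{03.11.T1}, under the assumptions $\vp^0\in D(\mcA)$, $\vp^1\in H_0^1(\Om;w)$, $f\in H^1(0,T;L^2(\Om))$, the solution satisfies $\vp\in L^\iy(0,T;D(\mcA))$ together with estimate \eqref{03.12.2}. In particular, for a.e. $t\in(0,T)$ we have $\vp(t)\in D(\mcA)$ and
\[
\|\mcA\vp(t)\|_{L^2(\Om)}\leq \|\vp(t)\|_{D(\mcA)}\leq C\left(\|\vp^0\|_{D(\mcA)}+\|\vp^1\|_{H_0^1(\Om;w)}+\|f\|_{H^1(0,T;L^2(\Om))}\right).
\]

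For a.e. $t$ we then apply Lemma \ref{02.13.L2} with $u=\vp(t)$. This yields
\[
\|\pt_{\theta\theta}\vp(t)\|^2+\|r^{\al/2}\pt_{\theta r}\vp(t)\|^2+\|r^{1+\al/2}\pt_{rr}\vp(t)\|^2+\|\pt_r(r^\al\pt_r\vp(t))\|^2\leq C\|\mcA\vp(t)\|_{L^2(\Om)}^2,
\]
with $C$ depending only on $\al$. Integrating in $t$ over $(0,T)$ and using $\|\mcA\vp\|_{L^2(Q)}^2\leq T\,\esssup_t\|\mcA\vp(t)\|^2$ gives both the memberships \eqref{02.15.2} and the estimate \eqref{02.15.1}. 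The factor $T$ and the constant from \eqref{03.12.2} are absorbed into $C$. Note that this constant actually depends on $T$ as well as $\al$, exactly as in Theorem \ref{03.11.T1}. Alternatively, one can bound $\mcA\vp=\pt_{tt}\vp-f$ in $L^2(Q)$ directly by $\|\pt_{tt}\vp\|_{L^2(Q)}+\|f\|_{L^2(Q)}$, and then use $\|f\|_{L^2(Q)}\leq\|f\|_{H^1(0,T;L^2(\Om))}$.

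The main technical point is measurability in time: we must check that $t\mapsto\pt_{\theta\theta}\vp(t)$ (and the other second-order quantities) define measurable $L^2(\Om)$-valued functions, so that the pointwise-in-$t$ estimate integrates to a statement about elements of $L^2(Q)$. I would handle this by observing that the maps $u\mapsto\pt_{\theta\theta}u$, $u\mapsto r^{\al/2}\pt_{\theta r}u$, and so on, are bounded linear operators from $D(\mcA)$ (with the graph norm) to $L^2(\Om)$ by Lemma \ref{02.13.L2}, extended by density from smooth functions. Composing these operators with the strongly measurable map $\vp\in L^\iy(0,T;D(\mcA))$ gives strongly measurable $L^2(\Om)$-valued functions. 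These are identified with the distributional derivatives of $\vp$ on $Q$ by testing against $C_0^\iy(Q)$ functions. The final weighted form in \eqref{02.15.1}, namely $r^{2+\al}(\pt_{rr}\vp)^2$, is just the square of $r^{1+\al/2}\pt_{rr}\vp$.
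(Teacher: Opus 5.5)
Your proposal is correct and follows essentially the paper's proof: apply Lemma \ref{02.13.L2} to $\vp(t)\in D(\mcA)$ for a.e.\ $t$, integrate over $(0,T)$, and bound $\|\mcA\vp\|_{L^2(Q)}$ through \eqref{03.12.2}. The paper uses your \emph{alternative} route, $\mcA\vp=\pt_{tt}\vp-f$; your remark that the resulting constant also depends on $T$ is accurate, and your measurability argument supplies a detail the paper leaves implicit.
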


\begin{proof}
	By the second part of Theorem \ref{03.11.T1}, we have
	\[
		\vp\in L^\iy(0,T; D(\mcA))\cap H^1(0,T; H_0^1(\Om;w))\cap H^2(0,T; L^2(\Om)).
	\]
	Hence, for a.e.~$t\in (0,T)$,
	\[
		\mcA\vp(t)=\pt_{tt}\vp(t)-f(t)\in L^2(\Om).
	\]
	The stationary weighted elliptic estimate in Lemma \ref{02.13.L2} yields, for a.e.~$t\in (0,T)$,
	\begin{equation*}
		\begin{split}
			&\|\pt_{\theta\theta}\vp(t)\|_{L^2(\Om)}^2+\|r^\f{\al}{2}\pt_{\theta r}\vp(t)\|_{L^2(\Om)}^2+\|r^{1+\f{\al}{2}}\pt_{rr}\vp(t)\|_{L^2(\Om)}^2\\
			&\hspace{4.5mm}+\|\pt_r(r^\al\pt_r\vp(t))\|_{L^2(\Om)}^2
			\leq C\|\mcA\vp(t)\|_{L^2(\Om)}^2,
			\end{split}
		\end{equation*}
		where $C$ depends only on $\al$. This is a global weighted estimate and should be distinguished from the classical $H^2$-regularity statement in \eqref{03.12.2}, which is only claimed on $\Om_{1/8}^\sharp$ away from the four vertices. In particular, \eqref{02.15.1} does not assert that all unweighted second-order derivatives are square integrable near the corner points. Possible corner singularities are recorded in the weights $r^{\al/2}$ and $r^{1+\al/2}$, and the estimate is used below only in this weighted form. Integrating over $(0,T)$ and using
	\[
		\|\mcA\vp\|_{L^2(Q)}\leq \|\pt_{tt}\vp\|_{L^2(Q)}+\|f\|_{L^2(Q)},
	\]
	together with \eqref{03.12.2}, we obtain \eqref{02.15.2} and \eqref{02.15.1}.
\end{proof}

\begin{corollary}[Weighted trace and vanishing identities]\label{02.15.C2}
	Let $\vp^0\in D(\mcA)$, $\vp^1\in H_0^1(\Om;w)$, and $f\in H^1(0,T;L^2(\Om))$. Let $\vp$ be the weak solution of \eqref{01.10.1} associated with $(\vp^0,\vp^1,f)$. Then:
	
	(i) $\pt_\theta \vp=0$ on $(\Ga_2^0\cup \Ga_2^1)\ts (0,T)$, 
	
	(ii)   $r^\f{\al}{2}\vp^2\in L^2(0,T; W^{1,1}(\Om))$, and $r^{\f{\al}{2}}\vp\in L^2(0,T; H^1(\Om))$, and $r \vp^2=r\vp=0 \mbox{ on } \Ga_2^0\ts (0,T)$, 
	
	(iii) $r(\nabla \vp\cdot A\nabla\vp)\in L^2(0,T; W^{1,1}(\Om))$, and $ r(\nabla\vp\cdot A\nabla\vp)=0$ on $\Ga_2^0\ts (0,T)$. 
\end{corollary}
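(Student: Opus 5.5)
The plan is to work at a.e.\ fixed $t$ with the regularity supplied by Theorem \ref{03.11.T1} (second part) and Corollary \ref{02.14.C1}. For a.e.\ $t$ we have $\vp(t)\in D(\mcA)$, the weighted second derivatives in \eqref{02.15.2} are square integrable, and Lemma \ref{03.11.L1} gives $r^{\al/2-1}\vp\in L^2$. Each of (i)--(iii) is proved first for smooth functions vanishing on $\pt\Om$, namely the approximants used in Lemma \ref{02.13.L2}: finite sine expansions in $\theta$ whose coefficients $u_n$ are regular on $(0,1]$ with $u_n(0)=u_n(1)=0$. We then pass to the limit using norms that are continuous under the $D(\mcA)$ topology, and integrate in time at the end.

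For (i), write $\vp(\theta,r)=\sum_n u_n(r)e_n(\theta)$. Then $\pt_\theta\vp=\sum_n u_n(r)e_n'(\theta)$. At $r=1$ each $u_n(1)=0$ directly. At $r=0$ we need the trace of $\pt_\theta\vp$, which is meaningful because $\pt_\theta\vp\in L^2$ and $\pt_r\pt_\theta\vp\in L^2(r^\al\df z)$. For $\al<1$ the weight $r^{-\al}$ is integrable, so a function with $\int r^\al|\pt_r g|^2<\iy$ has a trace at $r=0$ (by Cauchy--Schwarz, $|g(r)-g(s)|^2\le C|r^{1-\al}-s^{1-\al}|\int r^\al|g_r|^2$). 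Since $\pt_\theta\vp$ is the $H^1(\Om;w)$-limit of $\pt_\theta$ of $C_0^\iy$ functions, this trace is $0$ on $\Ga_2^0$. Equivalently, $\pt_\theta\vp\in H_0^1(\Om;w)$ by Lemma \ref{02.13.L2}, and Lemma \ref{03.11.L1} forces the vanishing.

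For (ii), compute $\pt_r(r^{\al/2}\vp)=\tfrac{\al}{2}r^{\al/2-1}\vp+r^{\al/2}\pt_r\vp$. Both terms are in $L^2$: the first by Lemma \ref{03.11.L1} and the second by the energy. The $\theta$-derivative is trivially in $L^2$. Hence $r^{\al/2}\vp\in L^2(0,T;H^1(\Om))$. Then $r^{\al/2}\vp^2=(r^{\al/2}\vp)\vp$ has gradient $2r^{\al/2}\vp\nabla\vp+\tfrac{\al}{2}r^{\al/2-1}\vp^2e_r$, which lies in $L^1$ by Cauchy--Schwarz combined with Lemma \ref{03.11.L1}; this gives the $W^{1,1}$ claim. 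For the vanishing of $r\vp$ and $r\vp^2$ on $\Ga_2^0$, note that the $H^1$ function $r^{\al/2}\vp$ has an $L^2$ trace on $\{r=0\}$. Since $\int_\Om r^{-2}(r^{\al/2}\vp)^2<\iy$ by Lemma \ref{03.11.L1}, that trace must be zero, because a nonzero trace would make the integral diverge like $\int_0 r^{-2}\,dr$. Multiplying by the bounded factors $r^{1-\al/2}$ and $r^{1-\al/2}\vp$ (the latter handled in $W^{1,1}$) yields zero traces for $r\vp$ and $r\vp^2$.

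For (iii), set $E=r(\pt_\theta\vp)^2+r^{1+\al}(\pt_r\vp)^2$. We show $E\in L^1$ and $\nabla E\in L^1$ with bounds from \eqref{02.15.1}. The term $\pt_\theta E$ involves $r\pt_\theta\vp\,\pt_{\theta\theta}\vp$ and $r^{1+\al}\pt_r\vp\,\pt_{\theta r}\vp$, and both are in $L^1$ by Cauchy--Schwarz, using $r^{\al/2}\pt_{\theta r}\vp\in L^2$. For $\pt_r E$, write $r^{1+\al}(\pt_r\vp)^2=r^{1-\al}(r^\al\pt_r\vp)^2$. Its derivative is $(1-\al)r^{-\al}(r^\al\pt_r\vp)^2+2r\pt_r\vp\,\pt_r(r^\al\pt_r\vp)$, which equals $(1-\al)r^\al(\pt_r\vp)^2+2r\pt_r\vp\,\pt_r(r^\al\pt_r\vp)$, and both terms are in $L^1$. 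The remaining piece, $(\pt_\theta\vp)^2+2r\pt_\theta\vp\,\pt_{r\theta}\vp$, is in $L^1$ since $r\le r^{\al/2}$. Vanishing on $\Ga_2^0$ is the step I expect to be the main obstacle, since we only know $E\in W^{1,1}$. The plan is to show $\int_\Om r^{-1}E\,\df z<\iy$; this integral equals the energy $\int\nabla\vp\cdot A\nabla\vp$ and is finite. A $W^{1,1}$ function $g$ with $r^{-1}g\in L^1$ has zero trace on $\{r=0\}$, because its trace function $\ga(\theta)$ satisfies $|g(\theta,r)-\ga(\theta)|\le\int_0^r|\pt_rg|$, so a nonzero trace contradicts the integrability of $r^{-1}$ near $0$ on a set of positive $\theta$-measure. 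The same device settles the vanishing in (ii).

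Finally, the $L^2(0,T;\cdot)$ and $L^1(0,T;\cdot)$ integrability in time follows from integrating the fixed-time bounds, using \eqref{03.12.2} and \eqref{02.15.1}.
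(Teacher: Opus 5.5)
Your proof of the membership claims in (ii) and (iii) follows the paper. You use the same gradient formulas, with $r^{\al/2-1}\vp\in L^2$ from Lemma \ref{03.11.L1} and the weighted second derivatives from Corollary \ref{02.14.C1}. The one variation is that you differentiate $r^{1-\al}(r^\al\pt_r\vp)^2$ and use $\pt_r(r^\al\pt_r\vp)\in L^2$, where the paper uses $r^{1+\al/2}\pt_{rr}\vp$; both are available.

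You differ in how you show the traces vanish, and there you are more careful than the paper. The paper gets (ii) from the zero trace of $\vp$, and gets (iii) from ``the factor $r$''. That alone does not suffice, since $\nabla\vp\cdot A\nabla\vp$ is only integrable. Your device is what makes the step rigorous: a $W^{1,1}$ function $g$ with $g/r\in L^1$ has zero trace on $\{r=0\}$. You apply it with $F/r=\nabla\vp\cdot A\nabla\vp$ in (iii) and with $r\vp^2/r=\vp^2$ in (ii).

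The step that fails as written is (i) at $r=0$. Both of your justifications there are false.
\begin{itemize}
\item In general $\pt_\theta\vp\notin H_0^1(\Om;w)$, because $\pt_\theta\vp$ is the normal derivative on $\{\theta=0\}\cup\{\theta=1\}$ and need not vanish there. For $\vp=\sin(\pi\theta)R_1(r)$ one has $\pt_\theta\vp=\pi R_1(r)$ on $\theta=0$.
\item For the same reason, $\vp(t)$ is not a $D(\mcA)$-graph-norm limit of $C_0^\iy$ functions; otherwise Lemma \ref{02.13.L2} would place $\pt_\theta\vp$ in $H_0^1(\Om;w)$.
\end{itemize}
Lemma \ref{02.13.L2} only gives $\pt_\theta\vp\in H^1(\Om;w)$. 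The membership $\pt_\theta\eta\in H_0^1(\Om;w)$ in \eqref{03.26.11} holds only because $\eta$ is localized in $\theta$.

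You can repair this within your own setup by using the truncated sine series $\vp_N=\sum_{n\le N}u_ne_n$ of $\vp(t)$ itself.
\begin{itemize}
\item Each $u_n(0)=0$, because $\int_0^1 r^{\al-2}u_n^2\,dr\le\int_\Om r^{\al-2}\vp^2\,dz<\iy$. Hence $\pt_\theta\vp_N$ vanishes on $\Ga_2^0$.
\item The quantity $\|\pt_\theta(\vp-\vp_N)\|_{L^2}^2+\|r^{\al/2}\pt_{\theta r}(\vp-\vp_N)\|_{L^2}^2$ equals $\sum_{n>N}\mu_n(\|u_n\|^2+\int_0^1r^\al|u_n'|^2\,dr)$. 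This is the tail of a series that converges by the identities in the proof of Lemma \ref{02.13.L2}, so it tends to $0$.
\item Your Cauchy--Schwarz estimate shows that the trace on $\{r=0\}$ is continuous for the norm $\|g\|_{L^2}+\|r^{\al/2}\pt_r g\|_{L^2}$, so the zero trace passes to the limit.
\end{itemize}
Alternatively, follow the paper: the trace of $\pt_\theta\vp$ on $\Ga_2^0$ is the tangential derivative of the zero trace of $\vp$. Concretely, pair $\pt_\theta\vp(\cdot,s)$ with $\phi\in C_0^\iy(0,1)$, integrate by parts in $\theta$, and let $s\to0$; your Cauchy--Schwarz bound guarantees that the slices converge in $L^2(0,1)$.
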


\begin{proof}
	Assertion (i) follows from the fact that the Dirichlet trace of $\vp$ on $\Ga_2^0\cup \Ga_2^1$ is identically zero, so its tangential derivative with respect to $\theta$ vanishes there.
	
	For (ii), by Corollary \ref{02.14.C1} and Theorem \ref{03.11.T1},
	\[
		r^{\f{\al}{2}}\pt_\theta \vp,\ r^{\f{\al}{2}}\pt_r\vp,\ r^{\f{\al}{2}-1}\vp\in L^2(Q),
	\]
	where the last term is a consequence of Lemma \ref{03.11.L1}. Therefore
	\[
		\nabla(r^{\f{\al}{2}}\vp)=\left(r^{\f{\al}{2}}\pt_\theta\vp,\ \f{\al}{2}r^{\f{\al}{2}-1}\vp+r^{\f{\al}{2}}\pt_r\vp\right)\in L^2(Q),
	\]
	which proves $r^{\f{\al}{2}}\vp\in L^2(0,T;H^1(\Om))$. Likewise,
	\[
		\nabla(r^{\f{\al}{2}}\vp^2)=\left(2r^{\f{\al}{2}}\vp\pt_\theta\vp,\ \f{\al}{2}r^{\f{\al}{2}-1}\vp^2+2r^{\f{\al}{2}}\vp\pt_r\vp\right)\in L^2(0,T;L^1(\Om)),
	\]
	so $r^{\f{\al}{2}}\vp^2\in L^2(0,T;W^{1,1}(\Om))$. Since $\vp=0$ on $\pt\Om$, we also have $r\vp=r\vp^2=0$ on $\Ga_2^0\ts (0,T)$.
	
	For (iii), set
	\[
		F=r(\nabla\vp\cdot A\nabla\vp)=r(\pt_\theta\vp)^2+r^{1+\al}(\pt_r\vp)^2.
	\]
	By Corollary \ref{02.14.C1},
	\[
		\pt_\theta F
		=2r(\pt_\theta\vp)(\pt_{\theta\theta}\vp)+2r^{1+\al}(\pt_r\vp)(\pt_{\theta r}\vp)\in L^2(0,T;L^1(\Om)),
	\]
	and
	\[
		\pt_r F
		=(\pt_\theta\vp)^2+2r(\pt_\theta\vp)(\pt_{\theta r}\vp)+(1+\al)r^\al(\pt_r\vp)^2+2r^{1+\al}(\pt_r\vp)(\pt_{rr}\vp)\in L^2(0,T;L^1(\Om)).
	\]
	Hence $F\in L^2(0,T;W^{1,1}(\Om))$. Finally, the factor $r$ gives $F=0$ on $\Ga_2^0\ts (0,T)$. This completes the proof.
\end{proof}

\begin{remark}\label{02.15.R3}
	Corollaries \ref{02.14.C1} and \ref{02.15.C2} are global weighted regularity statements. They do not assert ordinary $H^2$ regularity up to the four corner points of $\Om$. Accordingly, the classical $H^2$ improvement is recorded only on regions of the form $\Om_\de^\sharp$, away from the four vertices, as in Theorem \ref{03.11.T1}. The weighted corollaries themselves are kept in global form because Section \ref{S3} uses the corresponding weighted trace and vanishing identities at the degenerate side $r=0$. This is also why the observation region in Section \ref{S3} is restricted to the upper side away from the two top vertices.
\end{remark}

\section{Carleman Estimate and Observability with Interior Remainder}\label{S3}

Let $\de_0\in (0,\f{1}{32})$ be fixed. Set
\[
\om=[(0,4\de_0)\cup (1-4\de_0,1)]\ts (0,1),\qquad \Om_0=(\de_0,1-\de_0)\ts (0,1),\qquad Q_0=\Om_0\ts (0,T),
\]
and denote
\[
\Ga_{2,\de_0}^1=(\de_0,1-\de_0)\ts \{1\}\subset \Ga_2^1.
\]
Let $\vp^0\in D(\mcA)$, $\vp^1\in H_0^1(\Om;w)$, and $f\in H^1(0,T; L^2(\Om))$.

Choose
\begin{equation}\label{03.26.1}
	\zeta=\zeta(\theta)\in C_0^\iy(\de_0,1-\de_0),\quad  0\leq \zeta\leq 1, 
\end{equation}
such that
\begin{equation*}
	\zeta=1 \mbox{ on } (3\de_0, 1-3\de_0),\quad \zeta=0 \mbox{ on } [0,2\de_0)\cup (1-2\de_0,1), 
\end{equation*}
and 
\begin{equation*}
	|\pt_\theta\zeta|\leq C\de_0^{-1}, \quad |\pt_{\theta\theta} \zeta|\leq C\de_0^{-2}, 
\end{equation*}
where the positive constants $C$ are absolute. 

Denote $\psi=\zeta\vp$. Then $\psi$ solves
\begin{equation}\label{03.26.2}
	\begin{cases}
		\pt_{tt}\psi-\Div(A\nabla\psi)=\zeta f-2A\nabla\zeta\cdot \nabla \vp-\vp\Div(A\nabla\zeta)\equiv h, &\mbox{in }Q_0,\\
			\psi=0, &\mbox{on }\pt\Om_0\ts(0,T),\\
		\psi(0)=\zeta\vp^0,\pt_t\psi(0)=\zeta \vp^1, &\mbox{in }\Om_0. 
	\end{cases}
\end{equation}
Since $\zeta$ is compactly supported in $(\de_0,1-\de_0)$, the traces of $\psi$ and of the conjugated variable $\eta=e^{s\si}\psi$ vanish near the two upper corner points. Consequently, whenever only $\eta$ appears, the boundary integrals on the top side may be written over the whole segment $\Ga_2^1$ without changing their value.
This is also why the final observation is formulated on $\Ga_{2,\de_0}^1$ rather than on the whole top side: near the upper corner points one cannot expect the same $H^2$-type boundary regularity as on flat boundary portions away from the vertices.

Let $s,\la$ be positive constants to be specified later, and set
\begin{equation}\label{03.26.4}
	\xi=\theta^2+r^{2-\al}-\be(t-t_0)^2, \quad \si=e^{\la\xi}, 
\end{equation}
where $t_0\in (0,T)$, and   $\be\in (0,\f{1}{2}\min\{\f{1}{8}(2-\al)^2,\de_0\})$ are positive constants.  Denote
\begin{equation}\label{04.02.1}
	b(\xi)(z,t)=|\xi_t(z,t)|^2-A\nabla\xi\cdot \nabla\xi=4\be^2(t-t_0)^2-\left[4\theta^2+(2-\al)^2r^{2-\al}\right]. 
\end{equation}

\begin{equation}\label{03.26.5}
	\eta=e^{s\si}\psi\ \Lra \ \psi=\eta e^{-s\si}. 
\end{equation}
Under this conjugation, equation \eqref{03.26.2} takes the form
\begin{equation}\label{03.26.6}
	\begin{split}
		e^{s\si}h=P^+\eta+P^-\eta, 
	\end{split}
\end{equation}
where 
\begin{equation}\label{03.26.7}
	\begin{split}
		P^+\eta
		&\equiv \sum_{i=1}^2P_i^+\eta=\left[\eta_{tt}-\Div(A\nabla\eta)\right]+s^2\eta\left[(\si_t)^2-A\nabla\si\cdot\nabla\si\right],\\
		P^-\eta
		&\equiv \sum_{i=1}^2P_i^-\eta=2s\left[-\eta_t\si_t+\nabla\eta \cdot A\nabla\si\right]+s\eta \left[-\si_{tt}+\Div(A\nabla\si)\right]. 
	\end{split}
\end{equation}

By Theorem \ref{03.11.T1}, we have
\begin{equation}\label{03.26.8}
	\vp\in L^2(0,T; D(\mcA))\cap H^1(0,T; H_0^1(\Om;w))\cap H^2(0,T; L^2(\Om)), 
\end{equation}
and then 
\begin{equation}\label{03.26.9}
	\eta \in L^2(0,T; D(\mcA))\cap H^1(0,T; H_0^1(\Om;w))\cap H^2(0,T; L^2(\Om)).
\end{equation}
Corollary \ref{02.14.C1} yields
\begin{equation}\label{03.26.12}
	\begin{split} 
		\pt_{\theta\theta}\eta\in L^2(Q),\ r^\f{\al}{2}\pt_{\theta r}\eta \in L^2(Q),\ r^{1+\f{\al}{2}}\pt_{rr}\eta\in L^2(Q),\mbox{ and } \pt_r(r^\al\pt_r\eta)\in L^2(Q), 
	\end{split} 
\end{equation}
and then 
\begin{equation}\label{03.26.11}
	\pt_\theta\eta\in H_0^1(\Om;w), \mbox{ and } r^\al \pt_r\eta\in H^1(\Om). 
\end{equation}
Moreover, by the definition of $\eta$, we have
\begin{equation}\label{03.26.10}
	\eta=0 \mbox{ on } [(0,2\de_0)\cup (1-2\de_0,1)]\ts (0,1)\ts (0,T). 
\end{equation} 

All subsequent identities are first justified for smooth approximating solutions, so that every integration by parts is legitimate at the approximation level. The regularity recorded in \eqref{03.26.8}--\eqref{03.26.12}, together with the uniform estimates established in Section \ref{S2}, allows one to pass to the present solution class by density.

The choice of $\xi$ is adapted to the anisotropic principal part. Indeed,
\[
	A\nabla\xi=(2\theta,(2-\al)r),\qquad A\nabla\xi\cdot\nabla\xi=4\theta^2+(2-\al)^2r^{2-\al},
\]
so the radial component $r^{2-\al}$ produces a nondegenerate first-order factor despite the coefficient $r^\al$ in the operator.
Moreover,
\[
	D^2\xi=
	\begin{pmatrix}
		2 & 0\\
		0 & (2-\al)(1-\al)r^{-\al}
	\end{pmatrix},
\]
so the radial Hessian contributes a positive factor $(2-\al)(1-\al)r^{-\al}$ throughout the subcritical range $\al\in (0,1)$. This is the structural reason why the present Carleman weight is compatible with the degenerate operator and why the endpoint $\al=1$ lies outside the present framework.
The tangential term $\theta^2$ is equally essential: it is precisely this component that generates the positive angular contribution in the bulk estimates below, so the weight is not purely radial.

\subsection{Preliminary Computations}

From \eqref{03.26.4}, we obtain
\begin{equation}\label{03.26.13}
	\nabla\xi=\left(2\theta,(2-\al)r^{1-\al}\right), \quad \pt_t\xi=-2\be (t-t_0),
\end{equation}
and
\begin{equation}\label{03.29.1}
	\begin{split} 
		A\nabla\xi
		&=\left(2\theta, (2-\al)r\right). 
	\end{split} 
\end{equation}

From \eqref{03.26.4} and \eqref{03.26.13}, we obtain
\begin{equation}\label{03.31.2}
	\nabla\si=\la \si \nabla\xi=\la\si \left(2\theta,(2-\al)r^{1-\al}\right),\quad \pt_t\si=\la\si \xi_t=-2\be\la\si(t-t_0), 
\end{equation}
and 
\begin{equation}\label{03.31.3}
	\begin{split}
		A\nabla\si
		&=\la \si A\nabla\xi=\la \si(2\theta, (2-\al)r),\\  \nabla\eta\cdot A\nabla\si
		&=\la\si\left[2\theta\pt_\theta \eta+(2-\al)r\pt_r\eta\right],\\
		A\nabla\si\cdot \nabla\si
		&=\la^2\si^2\left[4\theta^2+(2-\al)^2r^{2-\al}\right], 
	\end{split}
\end{equation}
and 
\begin{equation}\label{04.01.3}
	\begin{split}
		\Div(A\nabla\si)
		&= (4-\al)\la\si +\la^2\si\left[4\theta^2+(2-\al)^2r^{2-\al}\right], \\
		\nabla \Div(A\nabla\si)
		&=\la^2\si\left((16-2\al)\theta, (2-\al)(8-3\al+\al^2)r^{1-\al}\right)\\
		&\hspace{4.5mm}+\la^3\si\left[4\theta^2+(2-\al)^2r^{2-\al}\right]\left(2\theta, (2-\al)r^{1-\al}\right), 
	\end{split}
\end{equation}
and 
\begin{equation}\label{03.31.4}
	\begin{split}
		D^2\si
		&=\la\si 
		\begin{pmatrix}
			2 &\\
			& (2-\al)(1-\al)r^{-\al}
		\end{pmatrix}\\
		&\hspace{4.5mm}+\la^2\si 
		\begin{pmatrix}
			4\theta^2 &2(2-\al)\theta r^{1-\al}\\
			2(2-\al)\theta r^{1-\al} & (2-\al)^2r^{2-2\al}
		\end{pmatrix},
	\end{split}
\end{equation}
and 
\begin{equation}\label{04.01.1}
	\begin{split}
		\si_{tt}=\la\si\xi_{tt}+\la^2\si(\xi_t)^2=-2\be \la\si+4\be^2\la^2\si(t-t_0)^2, 
	\end{split}
\end{equation}
and 
\begin{equation}\label{04.01.2}
	\begin{split}
		\nabla\si_{tt}
		&=\left[\la\xi_{tt}+\la^2(\xi_t)^2\right]\nabla\si =\left[-2\be\la^2+4\be^2\la^3(t-t_0)^2\right]\si\left(2\theta, (2-\al)r^{1-\al}\right),
	\end{split}
\end{equation} 
and 
\begin{equation}\label{04.02.3}
	\begin{split}
		(\si_t)^2-A\nabla\si\cdot \nabla\si
		&=\la^2\si^2(\xi_t)^2-\la^2\si^2 A\nabla\xi\cdot \nabla\xi=\la^2\si^2b(\xi)
	\end{split}
\end{equation}
and
\begin{equation}\label{04.02.2}
	\begin{split}
		\si_{tt}-\Div(A\nabla\si)
		&=-(4-\al+2\be)\la\si+\la^2\si b(\xi)
	\end{split}
\end{equation} 

Since $\eta\in L^2(0,T; H_0^1(\Om_0;w))$, we get: 

(I) $\eta^2\in L^1(Q_0)$, and $\pt_\theta \eta^2=2\eta\pt_\theta \eta\in L^1(Q_0)$, which shows that $\eta^2\in C_0(\de_0, 1-\de_0)$ for a.e.~$(r,t)\in (0,1)\ts (0,T)$ by \eqref{03.26.11}, and 
\begin{equation}\label{03.19.1}
	\begin{split}
		\iint_{Q_0} \pt_\theta \eta^2\df z\df t=\int_0^T\int_0^1\int_{\de_0}^{1-\de_0}\pt_\theta \eta^2\df \theta \df r\df t=0
	\end{split}
\end{equation}
by Fubini's Theorem and \eqref{03.26.10}; 

(II) $\eta^2\in L^1(Q_0)$, and $\pt_r (r\eta^2)=\eta^2+2r\eta \pt_r\eta\in L^1(Q_0)$, which shows that $r\eta^2\in C_0[0,1]$ for a.e.~$(\theta,t)\in (\de_0,1-\de_0)\ts (0,T)$ by \eqref{03.26.11}, and 
\begin{equation}\label{03.19.2}
	\begin{split}
		\iint_{Q_0} \pt_r(r\eta^2)\df z\df t=\int_0^T\int_{\de_0}^{1-\de_0}\int_0^1 \pt_r(r\eta^2)\df r\df \theta \df t=0
	\end{split}
\end{equation}
by Fubini's Theorem and Corollary \ref{02.15.C2} (ii). 

Since $\pt_\theta \eta\in H_0^1(\Om;w)$, we get: 

(III) $(\pt_\theta\eta)^2\in L^1(Q_0)$, and $\pt_\theta (\pt_\theta\eta)^2=2\pt_{\theta}\eta \pt_{\theta\theta}\eta\in L^1(Q_0)$, which shows that $(\pt_\theta\eta)^2\in C_0(\de_0,1-\de_0)$ for a.e.~$(r,t)\in (0,1)\ts (0,T)$ by \eqref{03.26.12}, and 
\begin{equation}\label{03.19.3}
	\iint_{Q_0} \pt_\theta (\pt_\theta\eta)^2\df z\df t=\int_0^T\int_0^1\int_{\de_0}^{1-\de_0}\pt_\theta (\pt_\theta \eta)^2\df \theta \df r\df t=0
\end{equation}
by Fubini's Theorem and \eqref{03.26.10}; 

(IV) $(\pt_\theta\eta)^2\in L^1(Q_0)$, and $\pt_r[r(\pt_\theta \eta)^2]=(\pt_\theta\eta)^2+2r(\pt_\theta \eta)(\pt_{r\theta}\eta)\in L^1(Q_0)$ by \eqref{03.26.12}, which shows that $r(\pt_\theta \eta)^2\in C_0[0,1]$ for a.e.~$(\theta,t)\in (\de_0,1-\de_0)\ts (0,T)$ by Corollary \ref{02.15.C2} (i) and  (iii), and 
\begin{equation}\label{03.19.4}
	\begin{split}
		\iint_{Q_0} \pt_r \left[r(\pt_\theta \eta)^2\right]\df z\df t=\int_0^T\int_{\de_0}^{1-\de_0}\int_0^1\pt_r \left[r(\pt_\theta\eta)^2\right]\df \theta \df r\df t=0
	\end{split}
\end{equation}
by Fubini's Theorem and Corollary \ref{02.15.C2} (i) and (iii). 

	(V) Since $r^\al (\pt_r\eta)^2\in L^1(Q_0)$ and $\pt_\theta [r^\al (\pt_r\eta)^2]=2r^\al (\pt_r\eta)(\pt_{\theta r}\eta)\in L^1(Q_0)$ by \eqref{03.26.12}, which shows that 
$r^\al (\pt_r\eta)^2\in C_0(\de_0,1-\de_0)$ for a.e.~$(r,t)\in (0,1)\ts (0,T)$, and 
\begin{equation}\label{03.19.5}
	\begin{split}
		\iint_{Q_0}\pt_\theta \left[r^\al (\pt_r\eta)^2\right]\df z\df t=\int_0^T\int_0^1\int_{\de_0}^{1-\de_0}\pt_\theta \left[r^\al (\pt_r\eta)^2\right]\df \theta \df r\df t=0
	\end{split}
\end{equation}
by Fubini's Theorem and \eqref{03.26.10}. 

(VI) Since $r^{\al+1}(\pt_r\eta)^2\in L^1(Q_0)$ and $\pt_r[r^{\al+1}(\pt_r\eta)^2]=(\al+1)r^\al(\pt_r\eta)^2+2r^{\al+1}(\pt_r\eta)(\pt_{rr}\eta)\in L^1(Q_0)$ by \eqref{03.26.12}, which shows that $r^{\al+1}(\pt_r\eta)^2\in C[\de_0,1-\de_0]$ for a.e.~$(\theta,t)\in (\de_0,1-\de_0)\ts (0,T)$, and 
\begin{equation}\label{03.19.6}
	\begin{split}
		\iint_{Q_0}\pt_r\left[r^{\al+1}(\pt_r\eta)^2\right]\df z\df t
		&=\int_0^T\int_{\de_0}^{1-\de_0}\int_0^1\pt_r\left[r^{\al+1}(\pt_r\eta)^2\right]\df r\df \theta \df t\\
		&=\iint_{\Ga_2^1\ts (0,T)} (\pt_r\eta)^2\df S\df t
	\end{split}
\end{equation}
by Fubini's Theorem and Corollary \ref{02.15.C2} (iii).

\subsection{Cross-Term Computations}

In this subsection we compute $(P_i^+ \eta, P_j^- \eta)_{L^2(Q_0)}$, for $(i,j)\in\{1,2\}^2$, term by term. Since integration by parts plays a crucial role, we indicate explicitly where it is used. We also write $x_1=\theta$ and $x_2=r$. 
No artificial radial boundary is introduced in Section \ref{S3}: the only radial boundary pieces come from $\Ga_2^0$ and $\Ga_2^1$. The terms on $\Ga_2^0$ vanish because of the factors $r$ or $r^{1+\al}$ together with Corollary \ref{02.15.C2}, so the only surviving observation term is carried by $\Ga_2^1$.

\medskip\noindent{\it Step 0: Auxiliary terminal-time condition.} We further assume
\begin{equation}\label{03.02.4}
	\eta(t)=\pt_t\eta(t)=0 \mbox{ for } t=0,T. 
\end{equation} 
This assumption will be removed later. In particular,
\begin{equation*}
	\nabla\eta(t)=\pt_t\nabla\eta(t)=0 \mbox{ for } t=0,T. 
\end{equation*}

\medskip\noindent{\it Step 1: Computation of $(P_1^+\eta, P_1^-\eta)_{L^2(Q_0)}$.}

	A direct expansion gives
\begin{equation*}
	\begin{split}
		(P_1^+\eta, P_1^-\eta)_{L^2(Q_0)}
		&=-2s\iint_{Q_0}\eta_{tt}\eta_t\si_t\df z\df t+2s\iint_{Q_0}\eta_{tt}\nabla \eta\cdot A\nabla\si\df z\df t\\
		&\hspace{4.5mm}+2s\iint_{Q_0}\left[\Div(A\nabla\eta)\right]\eta_t\si_t\df z\df t-2s\iint_{Q_0}\left[\Div(A\nabla\eta)\right](\nabla\eta \cdot A\nabla\si)\df z\df t\\
		&\equiv \sum_{i=1}^4P_{11,i}. 
	\end{split}
\end{equation*}

	Using \eqref{03.02.4}, we obtain
\begin{equation*}
	\begin{split}
		P_{11,1}
		&=s\iint_{Q_0}(\eta_t)^2\si_{tt}\df z\df t. 
	\end{split}
\end{equation*}

	By $\eta_t\in L^2(0,T; H_0^1(\Om;w))$, \eqref{03.19.1}, and \eqref{03.19.2},
\begin{equation*}
	\iint_{Q_0}\Div\left[(\eta_t)^2A\nabla\si\right]\df S\df t=0. 
\end{equation*} 
	Using again \eqref{03.02.4}, we obtain
\begin{equation*}
	\begin{split}
		P_{11,2}
		&=-2s\iint_{Q_0} \eta_t\nabla \eta_t\cdot A\nabla\si\df z\df t-2s\iint_{Q_0} \eta_t\nabla\eta \cdot A\nabla\si_t\df z\df t\\
		&=s\iint_{Q_0}(\eta_t)^2\Div(A\nabla\si)\df z\df t-2s\iint_{Q_0}\eta_t\nabla\eta \cdot A\nabla\si_t\df z\df t. 
	\end{split}
\end{equation*}

	From $\eta_t\in L^2(0,T; H_0^1(\Om;w))$ and \eqref{03.02.4}, we obtain
\begin{equation*}
	\begin{split}
		P_{11,3}
		&=-2s\iint_{Q_0}\si_tA\nabla\eta \cdot \nabla\eta_t\df z\df t-2s\iint_{Q_0} \eta_tA\nabla\eta \cdot \nabla\si_t\df z\df t\\
		&=s\iint_{Q_0}(A\nabla\eta\cdot \nabla\eta)\si_{tt}\df z\df t-2s\iint_{Q_0}\eta_t\nabla\eta \cdot A\nabla \si_t\df z\df t. 
	\end{split}
\end{equation*}

From \eqref{03.26.11}, \eqref{03.31.3}, and $\al\in (0,1)$, we infer that $\nabla\eta \cdot A\nabla\si\in H^1(\Om;w)$. Moreover, by Corollary \ref{02.15.C2} (iii),
\begin{equation*}
		\iint_{\Ga_2^0\ts (0,T)}(r\pt_r\eta)^2\df S\df t\leq \iint_{\Ga_2^0\ts (0,T)}r^{1+\al}(\pt_r\eta)^2\df S\df t=0, 
\end{equation*}
and then $r\pt_r\eta=0$ on $\Ga_2^0\ts (0,T)$. Together with these and Corollary \ref{02.15.C2} (i), 
	we get $\nabla\eta\cdot A\nabla\si=0$ on $\Ga_2^0\ts (0,T)$. Then
\begin{equation*}
		\begin{split}
			P_{11,4}
				&=-2s\iint_{\Ga_2^1\ts (0,T)}(A\nabla\eta\cdot \nu)(\nabla\eta\cdot A\nabla\si)\df S\df t \\
			&\hspace{6mm}+2s\iint_{Q_0}A\nabla\eta \cdot \nabla(\nabla\eta \cdot A\nabla\si)\df z\df t\\
			&=-2s\iint_{\Ga_2^1\ts (0,T)}(A\nabla\eta\cdot \nu)(\nabla\eta\cdot A\nabla\si)\df S\df t\\
			&\hspace{6mm}+2s\sum_{i,j=1}^2\iint_{Q_0}A_i(\pt_{x_i}\eta)(\pt_{x_j}\eta)(\pt_{x_i}A_j)(\pt_{x_j}\si)\df z\df t\\
			&\hspace{6mm}+2s\iint_{Q_0}A\nabla\eta\cdot \left[D^2\si(A\nabla\eta)\right]\df z\df t\\
			&\hspace{6mm}+2s\sum_{i,j=1}^2\iint_{Q_0}A_i(\pt_{x_i}\eta)(\pt_{x_ix_j}\eta)A_j(\pt_{x_j}\si)\df z\df t.
		\end{split}
	\end{equation*}
Note that Corollary \ref{02.15.C2} (i) and \eqref{03.31.3} yield
\begin{equation*}
	\begin{split}
			-2s\iint_{\Ga_2^1\ts (0,T)}(A\nabla\eta\cdot \nu)(\nabla\eta\cdot A\nabla\si)\df S\df t=-2(2-\al)s\la\iint_{\Ga_2^1\ts (0,T)} \si (\pt_r\eta)^2\df S\df t, 
	\end{split}
\end{equation*}
and 
\begin{equation*}
	\begin{split}
		&2s\sum_{i,j=1}^2\iint_{Q_0}A_i(\pt_{x_i}\eta)(\pt_{x_j}\eta)(\pt_{x_i}A_j)(\pt_{x_j}\si)\df z\df t=2(2-\al)\al s\la\iint_{Q_0}\si r^{\al}(\pt_r\eta)^2\df z\df t,
	\end{split}
\end{equation*}
and 
\begin{equation*}
	\begin{split}
		&2s\iint_{Q_0}A\nabla\eta\cdot \left[D^2\si(A\nabla\eta)\right]\df z\df t\\
		&=2s\la \iint_{Q_0} \si  \left[2(\pt_\theta \eta)^2+(2-\al)(1-\al)r^\al (\pt_r\eta)^2\right]\df z\df t\\
		&\hspace{4.5mm}+2s\la^2\iint_{Q_0}\si\left[2\theta\pt_\theta \eta+(2-\al)r\pt_r\eta\right]^2\df z\df t
	\end{split}
\end{equation*}
by \eqref{03.31.4}. 
We compute
\begin{equation*}
	\begin{split}
		&2s\sum_{i,j=1}^2\iint_{Q_0}A_i(\pt_{x_i}\eta)(\pt_{x_ix_j}\eta)A_j(\pt_{x_j}\si)\df z\df t\\
		&=s\sum_{i,j=1}^2\iint_{Q_0} A_i\left[\pt_{x_j}(\pt_{x_i}\eta)^2\right] A_j(\pt_{x_j}\si)\df z\df t\\
		&=s\iint_{Q_0} (\pt_\theta \si)\pt_\theta (\pt_\theta \eta)^2\df z\df t+s\iint_{Q_0} r^\al (\pt_r\si)\pt_r(\pt_\theta \eta)^2\df z\df t\\
		&\hspace{4.5mm}+s\iint_{Q_0} r^\al  (\pt_\theta \si)\pt_\theta(\pt_r\eta)^2\df z\df t+s\iint_{Q_0} r^{2\al}(\pt_r\si)\pt_r(\pt_r\eta)^2\df z\df t, 
	\end{split}
\end{equation*}
	Using \eqref{03.31.2}, \eqref{03.19.3}, and \eqref{03.19.4}, we obtain
\begin{equation*}
	\begin{split}
		s\iint_{Q_0}(\pt_\theta \si)\pt_\theta(\pt_\theta\eta)^2\df z\df t
		&=2s\la\iint_{Q_0}\si\theta\pt_\theta(\pt_\theta\eta)^2\df z\df t\\
		&=2s\la\iint_{Q_0}\pt_\theta \left[\si\theta(\pt_\theta\eta)^2\right]\df z\df t\\
		&\hspace{4.5mm}-2s\la\iint_{Q_0} \si(\pt_\theta \eta)^2\df z\df t-4s\la^2\iint_{Q_0}\si \theta^2(\pt_\theta\eta)^2\df z\df t\\
		&=-2s\la\iint_{Q_0} \si(\pt_\theta \eta)^2\df z\df t-4s\la^2\iint_{Q_0}\si \theta^2(\pt_\theta\eta)^2\df z\df t, 
	\end{split}
\end{equation*}
	Using \eqref{03.31.2} and \eqref{03.19.4}, we obtain
\begin{equation*}
	\begin{split}
		&s\iint_{Q_0}r^\al (\pt_r\si)\pt_r(\pt_\theta\eta)^2\df z\df t\\
		&=(2-\al)s\la\iint_{Q_0} \si r\pt_r(\pt_\theta\eta)^2\df z\df t\\
		&=(2-\al)s\la\iint_{Q_0}\pt_r\left[\si r(\pt_\theta \eta)^2\right]\df z\df t\\
		&\hspace{4.5mm}-(2-\al)s\la\iint_{Q_0}\si (\pt_\theta \eta)^2\df z\df t-(2-\al)^2s\la^2\iint_{Q_0} \si r^{2-\al}(\pt_\theta\eta)^2\df z\df t\\
		&=-(2-\al)s\la\iint_{Q_0}\si (\pt_\theta \eta)^2\df z\df t-(2-\al)^2s\la^2\iint_{Q_0} \si r^{2-\al}(\pt_\theta\eta)^2\df z\df t, 
	\end{split}
\end{equation*}
	Using \eqref{03.31.2} and \eqref{03.19.5}, we obtain
\begin{equation*}
	\begin{split}
		&s\iint_{Q_0}r^\al (\pt_\theta\si)\pt_\theta (\pt_r\eta)^2\df z\df t\\
		&=2s\la\iint_{Q_0}\si \theta r^\al \pt_\theta (\pt_r\eta)^2\df z\df t\\
		&=2s\la\iint_{Q_0}\pt_\theta \left[\si\theta r^\al (\pt_r\eta)^2\right]\df z\df t\\
		&\hspace{4.5mm}-2s\la\iint_{Q_0}\si r^\al (\pt_r\eta)^2\df z\df t-4s\la^2\iint_{Q_0} \si \theta^2r^\al (\pt_r\eta)^2\df z\df t\\
		&=-2s\la\iint_{Q_0}\si r^\al (\pt_r\eta)^2\df z\df t-4s\la^2\iint_{Q_0} \si \theta^2r^\al (\pt_r\eta)^2\df z\df t, 
	\end{split}
\end{equation*}
	Using \eqref{03.31.2} and \eqref{03.19.6}, we obtain
	\begin{equation*}
		\begin{split}
			&s\iint_{Q_0} r^{2\al}(\pt_r\si)\pt_r(\pt_r\eta)^2\df z\df t\\
			&=(2-\al)s\la\iint_{Q_0}\si r^{1+\al}\pt_r(\pt_r\eta)^2\df z\df t\\
		&=(2-\al)s\la\iint_{Q_0}\pt_r\left[\si r^{1+\al}(\pt_r\eta)^2\right]\df z\df t\\
		&\hspace{4.5mm}-(2-\al)(1+\al)s\la\iint_{Q_0} \si r^\al (\pt_r\eta)^2\df z\df t-(2-\al)^2s\la^2\iint_{Q_0}\si r^{2-\al}r^\al (\pt_r\eta)^2\df z\df t\\
			&=(2-\al)s\la\iint_{\Ga_2^1\ts (0,T)} \si (\pt_r\eta)^2\df S\df t\\
			&\hspace{4.5mm}-(2-\al)(1+\al)s\la\iint_{Q_0} \si r^\al (\pt_r\eta)^2\df z\df t-(2-\al)^2s\la^2\iint_{Q_0}\si r^{2-\al}r^\al (\pt_r\eta)^2\df z\df t,
		\end{split}
	\end{equation*}
	where the contribution at $r=0$ vanishes because $r^{1+\al}(\pt_r\eta)^2$ is controlled by $r(\nabla\eta\cdot A\nabla\eta)$ and Corollary \ref{02.15.C2}(iii) yields the corresponding vanishing trace on $\Ga_2^0\ts (0,T)$.
		Combining the preceding identities gives
	\begin{equation*}
		\begin{split}
		P_{11,4}
			&=-(2-\al)s\la\iint_{\Ga_2^1\ts (0,T)}\si (\pt_r\eta)^2\df S\df t\\
		&\hspace{4.5mm}+s\la\iint_{Q_0}\si\left[\al(\pt_\theta \eta)^2+(-3\al+\al^2)r^\al(\pt_r\eta)^2\right]\df z\df t\\
		&\hspace{4.5mm}+2s\la^2\iint_{Q_0}\si\left[2\theta\pt_\theta \eta+(2-\al)r\pt_r\eta\right]^2\df z\df t\\
		&\hspace{4.5mm}-s\la^2\iint_{Q_0}\si \left[4\theta^2+(2-\al)^2r^{2-\al} \right]\left[(\pt_\theta \eta)^2+r^\al (\pt_r\eta)^2\right]\df z\df t. 
	\end{split}
\end{equation*}

\medskip\noindent{\it Step 2: Computation of $(P_1^+\eta, P_2^-\eta)_{L^2(Q_0)}$.}

A direct expansion gives
\begin{equation*}
	\begin{split}
		(P_1^+\eta, P_2^-\eta)_{L^2(Q_0)}
		&=-s\iint_{Q_0} \eta_{tt}\eta \si_{tt}\df z\df t+s\iint_{Q_0} \eta_{tt}\eta \Div(A\nabla\si)\df z\df t\\
		&\hspace{4.5mm}+s\iint_{Q_0} \left[\Div(A\nabla\eta)\right]\eta \si_{tt}\df z\df t-s\iint_{Q_0} \left[\Div(A\nabla\eta)\right]\eta \Div(A\nabla\si)\df z\df t\\
		&\equiv \sum_{i=1}^4P_{12,i}. 
	\end{split}
\end{equation*}

Using \eqref{03.02.4}, we obtain
\begin{equation*}
	\begin{split}
		P_{12,1}
		&=s\iint_{Q_0} (\eta_t)^2\si_{tt}\df z\df t-\f{s}{2}\iint_{Q_0} \eta^2\si_{tttt}\df z\df t. 
	\end{split}
\end{equation*}

Using \eqref{03.02.4}, we obtain
\begin{equation*}
	\begin{split}
		P_{12,2}
		&=-s\iint_{Q_0}(\eta_t)^2\Div(A\nabla\si)\df z\df t+\f{s}{2}\iint_{Q_0}\eta^2\pt_{tt}\Div(A\nabla\si)\df z\df t. 
	\end{split}
\end{equation*}

Using $\eta\in L^2(0,T; H_0^1(\Om;w))$, \eqref{03.19.1}, \eqref{03.19.2}, and \eqref{04.01.2}, we obtain
\begin{equation*}
	\begin{split}
		P_{12,3}
		&=-s\iint_{Q_0} (A\nabla\eta\cdot \nabla\eta)\si_{tt}\df z\df t-s\iint_{Q_0}\eta \nabla\eta\cdot A\nabla\si_{tt}\df z\df t\\
		&=-s\iint_{Q_0}(A\nabla\eta\cdot \nabla\eta)\si_{tt}\df z\df t+\f{s}{2}\iint_{Q_0}\eta^2\pt_{tt}\Div(A\nabla\si)\df z\df t. 
	\end{split}
\end{equation*}

Using $\eta\in L^2(0,T; H_0^1(\Om;w))$, \eqref{04.01.3}, \eqref{03.19.1}, and \eqref{03.19.2}, we obtain
\begin{equation*}
	\begin{split}
		P_{12,4}
		&=s\iint_{Q_0} A\nabla\eta \cdot \nabla\eta \Div(A\nabla\si)\df z\df t+s\iint_{Q_0}\eta \nabla\eta \cdot A\nabla \Div(A\nabla\si)\df z\df t\\
		&=s\iint_{Q_0}A\nabla\eta\cdot \nabla \eta\left[\Div(A\nabla\si)\right]\df z\df t-\f{s}{2}\iint_{Q_0}\eta^2\mcA^2\si\df z\df t. 
	\end{split}
\end{equation*}

\medskip\noindent{\it Step 3: Computation of $(P_2^+\eta,P_1^-\eta)_{L^2(Q_0)}$.}

A direct expansion gives
	\begin{equation*}
		\begin{split}
			(P_2^+\eta, P_1^-\eta)_{L^2(Q_0)}
			&=-2s^3\iint_{Q_0}\eta\eta_t(\si_t)^3\df z\df t
			+2s^3\iint_{Q_0} \eta (\nabla\eta \cdot A\nabla\si )(\si_t)^2\df z\df t\\
			&\hspace{6mm}+2s^3\iint_{Q_0} \eta \eta_t \si_t A\nabla\si\cdot \nabla\si\df z\df t\\
			&\hspace{6mm}-2s^3\iint_{Q_0} \eta (\nabla\eta \cdot A\nabla\si)(A\nabla\si\cdot \nabla\si)\df z\df t\\
			&\equiv \sum_{i=1}^4P_{21,i}. 
		\end{split}
	\end{equation*}

Using \eqref{03.02.4}, we obtain
\begin{equation*}
	\begin{split}
		P_{21,1}
		&=3s^3\iint_{Q_0} \eta^2(\si_t)^2\si_{tt}\df z\df t. 
	\end{split}
\end{equation*}

Using \eqref{03.31.2}, \eqref{03.31.3}, \eqref{03.19.1}, and \eqref{03.19.2}, we obtain
\begin{equation*}
	\begin{split}
		P_{21,2}
		&=-s^3\iint_{Q_0} \eta^2(\si_t)^2\Div(A\nabla\si)\df z\df t-2s^3\iint_{Q_0}\eta^2\si_t\nabla\si \cdot A\nabla\si_t\df z\df t. 
	\end{split}
\end{equation*}

Using \eqref{03.02.4}, we obtain
\begin{equation*}
	\begin{split}
		P_{21,3}
		&=-s^3\iint_{Q_0} \eta^2\si_{tt}A\nabla\si\cdot \nabla\si\df z\df t-2s^3\iint_{Q_0}\eta^2\si_tA\nabla\si\cdot \nabla\si_t\df z\df t. 
	\end{split}
\end{equation*}

Using \eqref{03.31.3}, \eqref{03.19.1}, and \eqref{03.19.2}, we obtain
\begin{equation*}
	\begin{split}
		P_{21,4}
		&=s^3\iint_{Q_0}\eta^2\left[\Div(A\nabla\si)\right]A\nabla\si\cdot \nabla\si\df z\df t+s^3\iint_{Q_0} \eta^2 A\nabla\si\cdot \nabla(A\nabla\si\cdot \nabla\si)\df z\df t. 
	\end{split}
\end{equation*}

\medskip\noindent{\it Step 4: Computation of $(P_2^+\eta, P_2^-\eta)_{L^2(Q_0)}$.}

A direct expansion gives
\begin{equation*}
	\begin{split}
		(P_2^+\eta,P_2^-\eta)_{L^2(Q_0)}
		&=-s^3\iint_{Q_0}\eta^2(\si_t)^2\si_{tt}\df z\df t+s^3\iint_{Q_0} \eta^2(\si_t)^2\Div(A\nabla\si)\df z\df t\\
		&\hspace{4.5mm}+s^3\iint_{Q_0} \eta^2\si_{tt}A\nabla\si\cdot \nabla\si\df z\df t-s^3\iint_{Q_0}\eta^2\left[\Div(A\nabla\si)\right]A\nabla\si\cdot \nabla\si\df z\df t. 
	\end{split}
\end{equation*}

\medskip\noindent{\it Step 5: Combination of Steps 1--4.} We obtain
\begin{equation*}
	\begin{split}
		(P^+\eta,P^-\eta)_{L^2(Q_0)}
		&=-(2-\al)s\la\iint_{\Ga_2^1\ts (0,T)}\si (\pt_r\eta)^2\df S\df t\\
		&\hspace{4.5mm}+2s\la^2\iint_{Q_0}\si \left[2\theta\pt_\theta \eta+(2-\al)r\pt_r\eta\right]^2\df z\df t\\
		&\hspace{4.5mm}+2s\iint_{Q_0}(\eta_t)^2\si_{tt}\df z\df t-4s\iint_{Q_0} \eta_t\nabla\eta \cdot A\nabla\si_t\df z\df t\\
		&\hspace{4.5mm}+s\la\iint_{Q_0}\si\left[4(\pt_\theta \eta)^2+(2-\al)^2r^\al(\pt_r\eta)^2\right]\df z\df t\\
		&\hspace{4.5mm}-\f{s}{2}\iint_{Q_0} \eta^2(\pt_{tt}+\mcA)^2\si\df z\df t\\
		&\hspace{4.5mm}+2s^3\iint_{Q_0}\eta^2(\si_t)^2\si_{tt}\df z\df t-4s^3\iint_{Q_0}\eta^2\si_tA\nabla\si \cdot \nabla\si_t\df z\df t\\
		&\hspace{4.5mm}+s^3\iint_{Q_0}\eta^2 A\nabla\si\cdot \nabla (A\nabla\si\cdot \nabla\si)\df z\df t.
	\end{split}
\end{equation*}

Denote
\begin{equation*}
B\equiv-(2-\al)s\la\iint_{\Ga_2^1\ts (0,T)}\si (\pt_r\eta)^2\df S\df t,
\end{equation*}
and 
\begin{equation*}
	\begin{split}
		J_1
		&\equiv2s\la^2\iint_{Q_0}\si \left[2\theta\pt_\theta \eta+(2-\al)r\pt_r\eta\right]^2\df z\df t\\
		&\hspace{4.5mm}+ 2s\iint_{Q_0}(\eta_t)^2\si_{tt}\df z\df t-4s\iint_{Q_0} \eta_t\nabla\eta \cdot A\nabla\si_t\df z\df t\\
		&\hspace{4.5mm}+s\la\iint_{Q_0}\si\left[4(\pt_\theta \eta)^2+(2-\al)^2r^\al(\pt_r\eta)^2\right]\df z\df t,
	\end{split}
\end{equation*}
and
\begin{equation*}
	\begin{split}
		J_2
		&\equiv 2s^3\iint_{Q_0}\eta^2(\si_t)^2\si_{tt}\df z\df t-4s^3\iint_{Q_0}\eta^2\si_tA\nabla\si \cdot \nabla\si_t\df z\df t\\
		&\hspace{4.5mm}+s^3\iint_{Q_0}\eta^2 A\nabla\si\cdot \nabla (A\nabla\si\cdot \nabla\si)\df z\df t,
	\end{split}
\end{equation*}
and 
\begin{equation*}
	\begin{split}
		J_3
		&\equiv -\f{s}{2}\iint_{Q_0} \eta^2(\pt_{tt}+\mcA)^2\si\df z\df t. 
	\end{split}
\end{equation*}
Then
\begin{equation*}
	\begin{split}
		(P^+\eta,P^-\eta)_{L^2(Q_0)}=B+\sum_{i=1}^3J_i. 
	\end{split}
\end{equation*}

\subsection{Completion of the Estimate}

\medskip\noindent{\it Step 6: Computation of $(P^+\eta, \si \eta)_{L^2(Q_0)}$.}

Using \eqref{03.02.4}, \eqref{03.19.1}, and \eqref{03.19.2}, we obtain
\begin{equation*}
	\begin{split}
		(P^+\eta, \si \eta)_{L^2(Q_0)}
		&=-\iint_{Q_0}\si(\eta_t)^2\df z\df t+\iint_{Q_0}\si A\nabla\eta\cdot \nabla\eta \df z\df t\\
		&\hspace{4.5mm}+\f{1}{2}\iint_{Q_0}\eta^2\si_{tt}\df z\df t-\f{1}{2}\iint_{Q_0}\eta^2\Div(A\nabla\si)\df z\df t\\
		&\hspace{4.5mm}+s^2\iint_{Q_0}\si \eta^2\left[(\si_t)^2-A\nabla\si\cdot \nabla\si\right]\df z\df t. 
	\end{split}
\end{equation*}
Using \eqref{04.02.3} and \eqref{04.02.2}, we obtain
\begin{equation*}
	\begin{split}
		(P^+\eta, \si\eta)_{L^2(Q_0)}
		&=-\iint_{Q_0}\si(\eta_t)^2\df z\df t+\iint_{Q_0}\si A\nabla\eta\cdot \nabla\eta \df z\df t\\
		&\hspace{4.5mm}-\f{4-\al+2\be}{2}\la\iint_{Q_0}\si\eta^2\df z\df t\\
		&\hspace{4.5mm}+\f{1}{2}\la^2\iint_{Q_0} \si\eta^2b(\xi)\df z\df t+s^2\la^2\iint_{Q_0} \si^3\eta^2b(\xi)\df z\df t. 
	\end{split}
\end{equation*}

\medskip\noindent{\it Step 7: Estimate of $J_1$.}

Using \eqref{03.26.13}, \eqref{03.31.2}, and \eqref{04.01.1}, we have
\begin{equation*}
	\begin{split}
		J_1
		&=2s\la\iint_{Q_0}\si (\eta_t)^2 \xi_{tt}\df z\df t+s\la\iint_{Q_0}\si\left[4(\pt_\theta\eta)^2+(2-\al)^2r^\al(\pt_r\eta)^2\right]\df z\df t\\
		&\hspace{4.5mm}+2s\la^2\iint_{Q_0} \si \left(\eta_t\xi_t-\nabla \eta\cdot A\nabla\xi\right)^2\df z\df t\\
		&\geq -4\be s\la\iint_{Q_0}\si (\eta_t)^2 \df z\df t+s\la\iint_{Q_0}\si\left[4(\pt_\theta\eta)^2+(2-\al)^2r^\al(\pt_r\eta)^2\right]\df z\df t, 
	\end{split}
\end{equation*}
Together with Step 6, we obtain
\begin{equation*}
	\begin{split}
		J_1
		&\geq \f{1}{2}\left[(2-\al)^2+4\be\right] s\la (P^+\eta, \si \eta)_{L^2(Q_0)}\\
		&\hspace{4.5mm}+\f{1}{2}\left[(2-\al)^2-4\be\right]s\la\iint_{Q_0}\si \left[(\eta_t)^2+A\nabla\eta \cdot \nabla\eta\right] \df z\df t\\
		&\hspace{4.5mm}+\f{1}{4}\left[(2-\al)^2+4\be\right] (4-\al+2\be)s\la^2\iint_{Q_0}\si\eta^2\df z\df t\\
		&\hspace{4.5mm}-\f{1}{4}\left[(2-\al)^2+4\be\right] s\la^3\iint_{Q_0} \si \eta^2b(\xi)\df z\df t\\
		&\hspace{4.5mm}-\f{1}{2}\left[(2-\al)^2+4\be\right] s^3\la^3\iint_{Q_0}\si^3\eta^2b(\xi)\df z\df t. 
	\end{split}
\end{equation*}

\medskip\noindent{\it Step 8: Estimate of $J_2$.}

Using \eqref{03.26.13}, \eqref{03.31.2}, and \eqref{03.31.3}, we obtain
\begin{equation*}
	\begin{split}
		J_2
		&\geq 2s^3\la^4\iint_{Q_0} \si^3\eta^2\left[b(\xi)\right]^2\df z\df t\\
		&\hspace{4.5mm}+s^3\la^3\iint_{Q_0}\si^3\eta^2\left[16\theta^2+(2-\al)^4r^{2-\al}-16\be^3(t-t_0)^2\right]\df z\df t.
	\end{split}
\end{equation*}

\medskip\noindent{\it Step 9: Estimate of $J_3$.}

A direct expansion of $(\pt_{tt}+\mcA)^2\si$, using \eqref{04.01.1}--\eqref{04.02.2}, shows that it only contributes lower-order terms with respect to the positive bulk terms isolated in Steps 7 and 8. More precisely, after collecting the coefficients of the explicit derivatives of $\si$, one obtains a bound of the form
\[
	|J_3|
	\leq C\left(s\la^3\iint_{Q_0}\si\eta^2\df z\df t+s^3\la^2\iint_{Q_0}\si^3\eta^2\df z\df t\right),
\]
where the positive constant $C$ depends only on $\al$, $\be$, and $T$. These terms are of lower order and will be absorbed in the next step by choosing $\la$ and $s$ sufficiently large.

\medskip\noindent{\it Step 10: Final estimate of $(P^+\eta, P^-\eta)_{L^2(Q_0)}$.}

Combining the positive contributions obtained in Steps 7 and 8 with the lower-order control from Step 9, and then applying Young's inequality to the term $(P^+\eta,\si\eta)_{L^2(Q_0)}$, we can absorb all subordinate contributions into the coercive bulk terms by taking first $\la\geq \la_0(\de_0,\al,\be,T)$ and then $s\geq 2$ sufficiently large. Using also
\[
	16\theta^2+2(2-\al)^4r^{2-\al}\geq 16\de_0^2\qquad\mbox{on }Q_0,
\]
we arrive at
\begin{equation*}
	\begin{split}
		(P^+\eta,P^-\eta)_{L^2(Q_0)}
		&\geq B+ \f{1}{2}\left[(2-\al)^2-4\be\right]s\la\iint_{Q_0}\si \left[(\eta_t)^2+A\nabla\eta \cdot \nabla\eta\right] \df z\df t\\
		&\hspace{4.5mm}+Cs^3\la^3\iint_{Q_0} \si^3\eta^2\df z\df t-\e\|P^+\eta\|_{L^2(Q_0)}^2
	\end{split}
\end{equation*}
for $s\geq 2$ and $\la\geq \la_0$, where the positive constant $C$ depends only on $\de_0$, $\al$, $\be$, and $T$. Together with \eqref{03.26.6}, we obtain that for $0<\be<\f{1}{8}(2-\al)^2$, and for all $s\geq 2$ and $\la\ge \la_0$,  
	\begin{equation*}
		\begin{split}
			&s\la\iint_{Q_0}\si \left[(\eta_t)^2+A\nabla\eta \cdot \nabla\eta\right] \df z\df t+s^3\la^3\iint_{Q_0} \si^3\eta^2\df z\df t\\
				&\leq Cs\la\iint_{\Ga_2^1\ts (0,T)} \si(\pt_r\eta)^2\df S\df t+C\iint_{Q}f^2\df z\df t\\
			&\hspace{6mm}+C\iint_{\om\ts (0,T)} \left(\vp^2+A\nabla\vp\cdot \nabla\vp\right)e^{2s\si}\df z\df t, 
		\end{split}
	\end{equation*}
	where the positive constant $C$ depends only on $\de_0, \al, \be$ and $T$. Note that 
	\begin{equation*}
		\begin{split}
			\psi_t&=e^{-s\si}\left[\eta_t+2\be s\la (t-t_0)\eta\right],\\
			\nabla\psi&=e^{-s\si}\left[\nabla\eta-s\la \eta \si (2\theta, (2-\al)r^{1-\al})\right], 
		\end{split}
	\end{equation*}
Since $\psi=\zeta \vp$, we get, for $0<\be<\f{1}{8}(2-\al)^2$ and for all $s\geq 2$ and $\la\geq \la_0$, 
\begin{equation}\label{04.05.1}
	\begin{split}
		&s\la\iint_{(3\de_0, 1-3\de_0)\ts (0,1)\ts (0,T)}\si \left[(\psi_t)^2+A\nabla\psi \cdot \nabla\psi\right] e^{2s\si}\df z\df t\\
		&\hspace{4.5mm}+s^3\la^3\iint_{(3\de_0, 1-3\de_0)\ts (0,1)\ts (0,T)} \si^3\psi^2e^{2s\si}\df z\df t\\
			&\leq Cs\la\iint_{\Ga_{2,\de_0}^1\ts (0,T)} \si(\pt_r\vp)^2\df S\df t+C\iint_Q f^2\df z\df t\\
		&\hspace{4.5mm}+C\iint_{\om\ts (0,T)} \left(s^2\vp^2+A\nabla\vp\cdot \nabla\vp\right)e^{2s\si}\df z\df t, 
	\end{split}
\end{equation}
where the positive constant $C$ depends only on $\de_0, \al, \be$ and $T$.

\medskip\noindent{\it Step 11: Removal of the auxiliary terminal condition.} We now relax \eqref{03.02.4}. 

Denote $T_0=4\de_0^{-\f{1}{2}}$. Choose $T>T_0$ and $\ga>0$ such that 
\begin{equation*}
	\de_0T^2>8+4\ga, \quad \be T^2>8+4\ga, 
\end{equation*}
Then one verifies that
\begin{equation*}
	\xi(z,0)<-\ga,\quad \xi(z,T)<-\ga, \ \fa z\in \Om,\quad \xi\left(z,\f{T}{2}\right)\geq 0, \ \fa z\in \Om, 
\end{equation*}
by taking $t_0=\f{T}{2}$. Hence, there exist $\e\in (0,\f{T}{16})$ and $\wh\ga\in (0, \f{1}{2}\ga)$ such that 
\begin{equation*}
	\begin{split}
		\xi(z,t)
		&\leq -2\wh \ga, \ \fa z\in \Om, \fa t\in (0,2\e)\cup (T-2\e, T), \\
		\xi(z,t)
		&\geq -\wh \ga, \ \fa z\in \Om, \fa \left|t-\f{T}{2}\right|\leq \e. 
	\end{split}
\end{equation*}

Let $\vp$ be the solution of \eqref{01.10.1} with respect to $(\vp^0,\vp^1,f)$, where $\vp^0\in D(\mcA), \vp^1\in H_0^1(\Om;w)$ and $f\in H^1(0,T; L^2(\Om))$. We introduce a cut-off function $k\in C^\iy(\R)$ such that 
\begin{equation*}
	0\leq k\leq 1, \quad k=1 \mbox{ on }(2\e, T-2\e), \quad k=0 \mbox{ on }(-\iy, \e)\cup (T-\e,+\iy). 
\end{equation*}
Set 
\begin{equation*}
	\psi(z,t)=k(t)\zeta(z)\vp(z,t), \ (z,t)\in Q. 
\end{equation*}
Then $\psi$ satisfies the following equation 
\begin{equation*}
	\begin{cases}
		\pt_{tt}\psi-\Div(A\nabla \psi)=h, &\mbox{in }Q_0, \\
			\psi=0, &\mbox{on }\pt\Om_0\ts(0,T),\\
		\psi(t)=\pt_t\psi(t)=0, &\mbox{at } t=0, T, 
	\end{cases}
\end{equation*}
where 
\begin{equation*}
	h=k\zeta f+2\zeta (\pt_tk)(\pt_t\vp)+\zeta \vp \pt_{tt}k-2k\nabla\zeta \cdot A\nabla\vp-k\vp\Div(A\nabla\zeta). 
\end{equation*}
Combining this with \eqref{03.26.2}, \eqref{04.05.1}, and $\Om\setminus\om=(4\de_0,1-4\de_0)\ts (0,1)$, we obtain
\begin{equation}\label{04.05.2}
	\begin{split}
		&s\la\iint_{(\Om-\om)\ts (0,T)}\si \left[(\psi_t)^2+A\nabla\psi \cdot \nabla\psi\right] e^{2s\si}\df z\df t+s^3\la^3\iint_{(\Om-\om)\ts (0,T)} \si^3\psi^2e^{2s\si}\df z\df t\\
			&\leq Cs\la\iint_{\Ga_{2,\de_0}^1\ts (0,T)} \si(\pt_r\vp)^2\df S\df t+C\iint_{\om\ts (0,T)}\left(k_t\vp_t+k_{tt}\vp\right)^2e^{2s\si}\df z\df t\\
		&\hspace{4.5mm}+C\iint_Q f^2\df z\df t+C\iint_{\om\ts (0,T)} \left(s^2\vp^2+A\nabla\vp\cdot \nabla\vp\right)e^{2s\si}\df z\df t, 
	\end{split}
\end{equation}
where the positive constant $C$ depends only on $\de_0, \al, \be$ and $T$.  

Denote the constants
\begin{equation*}
	A_0\equiv e^{-\la \wh\ga},\quad A_1=e^{-2\la \wh \ga}, 
\end{equation*}
Then $A_0\geq A_1$, and 
\begin{equation*}
	\begin{split}
		e^{2A_0s}s\int_{\f{T}{2}-\e}^{\f{T}{2}+\e}E(t)\df t
			&\leq Ce^{Cs}s\iint_{\Ga_{2,\de_0}^1\ts (0,T)}(\pt_r\vp)^2\df S\df t+Ce^{Cs}\iint_Q f^2\df z\df t\\
		&\hspace{4.5mm}+
		Ce^{2A_1s}\iint_Q \left[(\vp_t)^2+A\nabla\vp\cdot \nabla\vp\right]\df z\df t\\
		&\hspace{4.5mm}+Ce^{Cs}\iint_{\om\ts (0,T)}\left[(\vp_t)^2+A\nabla\vp\cdot \nabla\vp+s^2\vp^2\right]\df z\df t, 
	\end{split}
\end{equation*}
where 
	\begin{equation*}
		E(t)=\f{1}{2}\int_{\Om}\left[(\vp_t)^2+A\nabla\vp\cdot\nabla\vp\right]\df z.
	\end{equation*}
Therefore,
\begin{equation}\label{04.05.3}
	\begin{split}
		e^{2A_0s}s\int_{\f{T}{2}-\e}^{\f{T}{2}+\e}E(t)\df t
			&\leq Ce^{Cs}s\iint_{\Ga_{2,\de_0}^1\ts (0,T)}(\pt_r\vp)^2\df S\df t+Ce^{Cs}\iint_Q f^2\df z\df t\\
		&\hspace{4.5mm}+Ce^{2A_1s}E(0)T\\
		&\hspace{4.5mm}+Ce^{Cs}\iint_{\om\ts (0,T)}\left[(\vp_t)^2+A\nabla\vp\cdot \nabla\vp+s^2\vp^2\right]\df z\df t.
	\end{split}
\end{equation}

\medskip\noindent{\it Step 12: Conclusion for the homogeneous equation.} Let $f=0$. 

Since $\pt_t\vp\in L^2(0,T; H_0^1(\Om;w))$ and 
\begin{equation*}
	\f{1}{2}\int_{\Om} (\pt_t\vp)^2\df z\bigg|_{t=0}^{t=T}+\f{1}{2}\int_\Om A\nabla\vp\cdot \nabla\vp\df z\bigg|_{t=0}^{t=T}=0
\end{equation*}
by multiplying both sides of \eqref{01.10.1} by $\pt_t\vp$ and integrating by parts. This implies that $E(0)=E(t)$ for all $t\in[0,T]$. 

From \eqref{04.05.3}, we get
\begin{equation*}
	\begin{split}
		2\e e^{2A_0s}s E(0)
			&\leq Ce^{Cs}s\iint_{\Ga_{2,\de_0}^1\ts (0,T)}(\pt_r\vp)^2\df S\df t+Ce^{2A_1s}E(0)T\\
		&\hspace{4.5mm}+Ce^{Cs}\iint_{\om\ts (0,T)}\left[(\vp_t)^2+A\nabla\vp\cdot \nabla\vp+s^2\vp^2\right]\df z\df t.
	\end{split}
\end{equation*}
Therefore, 
\begin{equation*}
	\begin{split}
		E(0)
			&\leq Ce^{Cs}\iint_{\Ga_{2,\de_0}^1\ts (0,T)}(\pt_r\vp)^2\df S\df t+Ce^{-2(A_0-A_1)s}E(0)\\
		&\hspace{4.5mm}+Ce^{Cs}\iint_{\om\ts (0,T)} \left[(\vp_t)^2+A\nabla\vp\cdot \nabla\vp+s^2\vp^2\right]\df z\df t. 
	\end{split}
\end{equation*}
Taking $s\geq s_0$ with $s_0\equiv s_0(\de_0, \al, \be, T)\geq 2$  such that 
\begin{equation*}
	Ce^{-2(A_0-A_1)s}\leq \f{1}{2}, 
\end{equation*} 
we get
\begin{equation*}
	\begin{split}
			E(0)\leq C\iint_{\Ga_{2,\de_0}^1\ts (0,T)}(\pt_r\vp)^2\df S\df t+C\iint_{\om\ts (0,T)} \left[(\vp_t)^2+A\nabla\vp\cdot \nabla\vp+\vp^2\right]\df z\df t. 
	\end{split}
\end{equation*}

\begin{theorem}\label{04.05.T1}
	Let $\vp^0\in D(\mcA)$ and $\vp^1\in H_0^1(\Om;w)$. Then there exists a positive constant $C$, depending only on $\de_0,\al,\be$ and $T$, such that the solution $\vp$ of \eqref{01.10.1} satisfies 
	\begin{equation*}
				E(0)\leq C\iint_{\Ga_{2,\de_0}^1\ts (0,T)}(\pt_r\vp)^2\df S\df t+C\iint_{\om\ts (0,T)}\left[(\vp_t)^2+A\nabla\vp\cdot \nabla\vp +\vp^2\right]\df z\df t
	\end{equation*}
	provided that $T$ satisfies the sufficient condition in Remark \ref{04.05.R2}. 
\end{theorem}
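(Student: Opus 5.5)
The estimate of Theorem \ref{04.05.T1} is essentially the endpoint of Steps 0--12, so the plan is to assemble those steps into a proof and check the points where constants and regularity are used. Throughout, take $f=0$ and data $\vp^0\in D(\mcA)$, $\vp^1\in H_0^1(\Om;w)$. By Theorem \ref{03.11.T1} and Corollaries \ref{02.14.C1}--\ref{02.15.C2}, the solution has the regularity \eqref{03.26.8}--\eqref{03.26.12}. This justifies every integration by parts in Steps 1--6 and makes the identities (I)--(VI) available. In particular, the boundary terms on $\Ga_2^0$ vanish and the only surviving boundary contribution lives on $\Ga_2^1$.

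\emph{Step A: Carleman estimate.} For $\psi=k\zeta\vp$, I would take $t_0=T/2$ and $T$ as in Remark \ref{04.05.R2}, so that $\de_0T^2>8+4\ga$ and $\be T^2>8+4\ga$. Then $\psi$ satisfies the auxiliary condition \eqref{03.02.4}, and \eqref{04.05.1} applies. The lower-order terms $h$ generated by the cut-offs are supported in two places:
\begin{itemize}
\item $\om\times(0,T)$, where $\nabla\zeta\neq0$;
\item the time layers $(0,2\e)\cup(T-2\e,T)$, where $\xi\le-2\wh\ga$.
\end{itemize}
The first kind is kept as the interior remainder. The second kind is bounded by $e^{2A_1 s}$ times the total energy, which gives \eqref{04.05.3}.

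\emph{Step B: from weighted to unweighted.} On $|t-T/2|\le\e$ one has $\xi\ge-\wh\ga$, so $\si\ge A_0$ and $e^{2s\si}\ge e^{2A_0s}$. This bounds the left side of \eqref{04.05.2} from below by $e^{2A_0 s}s\int E$ restricted to $\Om\setminus\om$. Adding the $\om$-part of the energy to both sides recovers the full $E(t)$ at the cost of one more interior remainder term. On the right side, $\si\le e^{\la C}$ bounds every weight by $e^{Cs}$.

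\emph{Step C: close the estimate.} Energy conservation $E(t)=E(0)$ holds for $f=0$, by the multiplier $\pt_t\vp$ justified through Theorem \ref{03.11.T1}. Since $A_0>A_1$, I would first fix $\la\ge\la_0$ and then choose $s=s_0$ large enough that $Ce^{-2(A_0-A_1)s}T/(2\e)\le 1/2$. The $E(0)$ term on the right is then absorbed into the left, which leaves the stated inequality with $C$ depending on $\de_0,\al,\be,T$. The $s^2\vp^2$ term over $\om$ is only a constant multiple of $\vp^2$ once $s$ is fixed.

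\emph{Main obstacle.} The delicate point is the positivity in Steps 7--10. One needs the zeroth-order coefficient $2\la^4 b(\xi)^2+\la^3[16\theta^2+(2-\al)^4r^{2-\al}-16\be^3(t-t_0)^2]$ to dominate $J_3$ and the Step 6 correction uniformly on $Q_0$. This includes the region where $b(\xi)$ is small and where $\theta\ge\de_0$ is the only nondegeneracy. My first attempt would be to use $\be<\de_0/2$ together with $(t-t_0)^2\le T^2/4$. If the $\be^3(t-t_0)^2$ term then threatens the sign, I would split according to $|b(\xi)|\gtrless\mu$: for $|b|\ge\mu$, the $\la^4b^2$ term dominates once $\la$ is large, and for $|b|<\mu$, the spatial part $4\theta^2+(2-\al)^2r^{2-\al}\approx4\be^2(t-t_0)^2$ is comparable to the $\be^2$-sized time term, so it outweighs $16\be^3(t-t_0)^2$ when $\be$ is small.
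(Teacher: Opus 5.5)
Your proposal is correct and follows the paper's own route through Steps 0--12. The ingredients match: the Carleman estimate for $\psi=k\zeta\vp$ with $t_0=T/2$; the lower bound $e^{2A_0s}$ on the window $|t-T/2|\le\e$ against the $e^{2A_1s}$ bound on the cut-off layers; energy conservation for $f=0$; and absorption of $E(0)$ using $A_0>A_1$.

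The one place you are more careful than the paper is the positivity of the $s^3\la^3\si^3\eta^2$ coefficient in Steps 7--10. Your splitting according to $|b(\xi)|\gtrless\mu$ is the right way to handle $-16\be^3(t-t_0)^2$; the crude bound via $(t-t_0)^2\le T^2/4$ alone does not close for large $T$. The splitting works because $4\theta^2+(2-\al)^2r^{2-\al}\ge 4\de_0^2$ on $Q_0$ and $\be<(2-\al)^2/16$, and the same two facts also control the Step 6 correction $-\frac12[(2-\al)^2+4\be]\la^3 b(\xi)$ on $\{|b|<\mu\}$.
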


\begin{theorem}\label{04.05.T2}
	Let $\vp^0\in H_0^1(\Om;w)$ and $\vp^1\in L^2(\Om)$. Then there exists a positive constant $C$, depending only on $\de_0,\al,\be$ and $T$, such that the solution $\vp$ of \eqref{01.10.1} satisfies 
	\begin{equation*}
				E(0)\leq C\iint_{\Ga_{2,\de_0}^1\ts (0,T)}(\pt_r\vp)^2\df S\df t+C\iint_{\om\ts (0,T)}\left[(\vp_t)^2+A\nabla\vp\cdot \nabla\vp +\vp^2\right]\df z\df t
	\end{equation*}
	provided that $T$ satisfies the sufficient condition in Remark \ref{04.05.R2}. 
\end{theorem}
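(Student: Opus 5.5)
The plan is to obtain Theorem \ref{04.05.T2} from Theorem \ref{04.05.T1} by density, using the well-posedness estimate \eqref{03.12.1} of Theorem \ref{03.11.T1} to pass every term to the limit.

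\emph{Approximation.} Given $\vp^0\in H_0^1(\Om;w)$ and $\vp^1\in L^2(\Om)$, choose $\vp^0_n\in D(\mcA)$ and $\vp^1_n\in H_0^1(\Om;w)$ with $\vp^0_n\to\vp^0$ in $H_0^1(\Om;w)$ and $\vp^1_n\to\vp^1$ in $L^2(\Om)$. These exist because $D(\mcA)$ is dense in $H_0^1(\Om;w)$ and $H_0^1(\Om;w)$ is dense in $L^2(\Om)$. Concretely, one can truncate the eigenfunction expansion in the basis $\{e_k\}$ from Step 1 of the proof of Theorem \ref{03.11.T1}, since each $e_k\in D(\mcA)$. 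Let $\vp_n$ be the solution with data $(\vp^0_n,\vp^1_n,0)$. Theorem \ref{04.05.T1} applies to $\vp_n$ with a constant $C$ independent of $n$:
\[
E_n(0)\leq C\iint_{\Ga_{2,\de_0}^1\ts (0,T)}(\pt_r\vp_n)^2\df S\df t+C\iint_{\om\ts (0,T)}\left[(\pt_t\vp_n)^2+A\nabla\vp_n\cdot \nabla\vp_n+\vp_n^2\right]\df z\df t .
\]

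\emph{Passage to the limit.} By linearity, $\vp_n-\vp$ solves \eqref{01.10.1} with data $(\vp^0_n-\vp^0,\vp^1_n-\vp^1,0)$. Estimate \eqref{03.12.1} therefore gives:
\begin{itemize}
\item[(a)] $\vp_n\to\vp$ in $L^\iy(0,T;H_0^1(\Om;w))$ and $\pt_t\vp_n\to\pt_t\vp$ in $L^\iy(0,T;L^2(\Om))$;
\item[(b)] $\pt_r\vp_n\to\pt_r\vp$ in $L^2(0,T;L^2(\Ga_2^1))$, which is the hidden trace estimate, and hence also in $L^2$ of $\Ga_{2,\de_0}^1\ts(0,T)$.
\end{itemize}
From (a), $E_n(0)\to E(0)$, and the interior remainder over $\om\ts(0,T)$ converges. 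Its zero-order part is controlled by $L^2$ convergence, using \eqref{03.11.1} if one prefers to bound it by the weighted gradient. Letting $n\to\iy$ in the inequality above yields the claim with the same constant.

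\emph{Main obstacle.} The delicate point is the meaning of the boundary term for the limiting solution, which a priori has only $H_0^1(\Om;w)\times L^2$ regularity. This is handled by defining $\pt_r\vp|_{\Ga_2^1}$ as the $L^2(0,T;L^2(\Ga_2^1))$ limit given by the extension of the trace map in Step 3 of Theorem \ref{03.11.T1}. The limit is independent of the chosen approximating sequence by the same estimate.

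Two further checks are needed. First, the constant in Theorem \ref{04.05.T1} must depend only on $\de_0,\al,\be,T$ and not on the higher norms of the data; this holds by its statement. Second, the condition on $T$ from Remark \ref{04.05.R2} must be the same for both theorems, which is the case.
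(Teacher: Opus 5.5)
Your proposal is correct and matches the paper's proof: approximate the data, apply Theorem \ref{04.05.T1} to the approximate solutions, and pass to the limit using \eqref{03.12.1} applied to differences, including the hidden trace bound on $\Ga_2^1$. The only difference is cosmetic: the paper takes $C_0^\infty(\Om)$ approximants, which already lie in $D(\mcA)$, whereas you use truncated eigenfunction expansions.
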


\begin{remark}\label{04.05.R1}
		The lower-order interior term in Theorems \ref{04.05.T1} and \ref{04.05.T2} is a natural byproduct of the present Carleman-multiplier argument. In strictly hyperbolic settings, such a term is often removed by a compactness-uniqueness argument; see, for example, \cite{Lions,Weiss,Zuazua}. For the present model, however, this would require an appropriate global unique continuation property across the degenerate boundary set $\Ga_2^0$ for a multi-dimensional degenerate hyperbolic operator. The usual strictly hyperbolic microlocal and Holmgren-type arguments do not apply directly at $r=0$, where the principal symbol loses uniform ellipticity in the normal direction and the natural boundary topology changes with the degeneracy.

		The separated structure of the rectangle also shows that this issue is not merely technical. Expanding in the sine basis in $\theta$ reduces the homogeneous equation to the family
		\[
			\pt_{tt}\Phi_n-\pt_r(r^\al \pt_r\Phi_n)+(n\pi)^2\Phi_n=0,\qquad n\geq 1.
		\]
		Let $R_1$ be a first radial eigenfunction of $-\pt_r(r^\al\pt_r)$ with the Dirichlet condition at $r=0,1$, and consider the high tangential modes
		\[
			\vp_n(\theta,r,t)=R_1(r)\sin(n\pi\theta)\cos(\omega_n t),\qquad 
			\omega_n^2=(n\pi)^2+\rho_1 .
		\]
		Their energy grows like $(n\pi)^2$, whereas the top-boundary trace $\pt_r\vp_n|_{\Ga_2^1}$ is governed by $R_1'(1)$ and does not grow with $n$. Hence a pure estimate using only the normal trace on the top boundary cannot be expected uniformly over all tangential frequencies. Removing the interior term would therefore require additional information not used in the proof below, for instance an enlarged observation set, restrictions on the admissible spectral content, a UCP adapted to the weighted principal symbol, or a separated-variable spectral theory with uniform constants under additional hypotheses. Accordingly, the estimate proved above should be understood as the quantitative bound furnished by the current direct energy framework.
\end{remark}

\begin{remark}\label{04.05.R2}
	A sufficient condition ensuring that the proof works is
	\[
		T>\max\left\{4\de_0^{-1/2},\sqrt{\frac{8}{\be}}\right\}.
	\]
	In fact, this guarantees the existence of $\ga>0$ such that $\de_0T^2>8+4\ga$ and $\be T^2>8+4\ga$, which is exactly the requirement used in Step 11.
\end{remark}

\begin{proof}
	Choose $\vp_n^0\in C_0^\iy(\Om)$ and $\vp_n^1\in C_0^\iy(\Om)$ such that 
	\begin{equation*}
			\vp_n^0\ra \vp^0 \mbox{ strongly in } H_0^1(\Om;w), \mbox{ and } \vp_n^1\ra \vp^1\mbox{ strongly in } L^2(\Om) 
	\end{equation*}
	as $n\ra\iy$. Let $\vp_n$ be the solution of \eqref{01.10.1} corresponding to $(\vp_n^0,\vp_n^1)$, and let $\vp$ be the solution corresponding to $(\vp^0,\vp^1)$.
	
	For $n,m\in\N$, set $w_{n,m}=\vp_n-\vp_m$. Then $w_{n,m}$ solves \eqref{01.10.1} with $f=0$ and initial data
	\begin{equation*}
			w_{n,m}(0)=\vp_n^0-\vp_m^0,\qquad \pt_t w_{n,m}(0)=\vp_n^1-\vp_m^1.
	\end{equation*}
	Applying \eqref{03.12.1} to $w_{n,m}$, we obtain
	\begin{equation*}
		\begin{split}
			&\esssup_{t\in [0,T]}\left(\|w_{n,m}(t)\|_{H_0^1(\Om;w)}+\|\pt_t w_{n,m}(t)\|_{L^2(\Om)}\right)+\|\pt_r w_{n,m}\|_{L^2(0,T;L^2(\Ga_2^1))}\\
			&\leq C\left(\|\vp_n^0-\vp_m^0\|_{H_0^1(\Om;w)}+\|\vp_n^1-\vp_m^1\|_{L^2(\Om)}\right).
		\end{split}
	\end{equation*}
	Hence $\{\vp_n\}$ is Cauchy in
	\[
		L^\iy(0,T;H_0^1(\Om;w))\cap W^{1,\iy}(0,T;L^2(\Om)),
	\]
	and $\{\pt_r\vp_n\}$ is Cauchy in $L^2(0,T;L^2(\Ga_2^1))$. By the well-posedness result in Theorem \ref{03.11.T1}, the limits are necessarily $\vp$ and $\pt_r\vp$, respectively. In particular,
	\begin{equation*}
		\vp_n\ra \vp \mbox{ strongly in } L^2(0,T;H_0^1(\Om;w)),\quad \pt_t\vp_n\ra \pt_t\vp \mbox{ strongly in } L^2(Q),
	\end{equation*}
	and
	\begin{equation*}
		\pt_r\vp_n\ra \pt_r\vp \mbox{ strongly in } L^2(0,T;L^2(\Ga_2^1)).
	\end{equation*}
	In particular, the quadratic interior terms converge as well, because $\pt_t\vp_n\to \pt_t\vp$ in $L^2(Q)$, $A^{1/2}\nabla\vp_n\to A^{1/2}\nabla\vp$ in $L^2(Q)$, and $\vp_n\to \vp$ in $L^2(Q)$.
		Therefore, Theorem \ref{04.05.T1} yields, for every $T$ satisfying Remark \ref{04.05.R2},
	\begin{equation*}
				E_n(0)\leq C\iint_{\Ga_{2,\de_0}^1\ts (0,T)}(\pt_r\vp_n)^2\df S\df t+C\iint_{\om\ts (0,T)}\left[(\pt_t\vp_n)^2+A\nabla\vp_n\cdot \nabla\vp_n +\vp_n^2\right]\df z\df t, 
	\end{equation*}
	where the positive constant $C$ depends only on $\de_0, \al, \be$ and $T$. Letting $n\ra\iy$, we obtain the desired inequality for $\vp$.
\end{proof}

\section{Framework-Level Obstruction}\label{S4}

The purpose of this section is not to develop an endpoint theory at $\al=1$, nor to claim that observability fails at the critical exponent. Instead, we isolate a simpler obstruction: the weighted Dirichlet coercivity underlying the subcritical framework of Section \ref{S2} cannot remain uniform at the critical threshold. In this sense, the analysis carried out in the present paper is intrinsically subcritical.

For $\de\in (0,1)$, denote 
\[
\Om_\de=(0,1)\ts (\de,1). 
\]

\begin{proposition}\label{04.10.P1}
	Let $C_\de^{\rm crit}$ be the best constant in the critical truncated Hardy-Poincar\'e inequality
	\begin{equation}\label{04.10.1}
		\iint_{\Om_\de}\f{u^2}{r}\df z\leq C_\de^{\rm crit}\iint_{\Om_\de} r(\pt_r u)^2\df z 
	\end{equation}
	for all $u\in H^1(\Om_\de)$ satisfying $u(\theta,1)=0$. Then
	\begin{equation}\label{04.10.2}
		C_\de^{\rm crit}=\f{4}{\pi^2}|\ln \de|^2.
	\end{equation}
	In particular, the critical coercivity constant blows up at the logarithmic rate $|\ln \de|^2$ as $\de\ra 0^+$.
\end{proposition}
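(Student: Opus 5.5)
The plan is to reduce \eqref{04.10.1} to a one-dimensional mixed Dirichlet--Neumann Poincar\'e inequality by a logarithmic change of variables. The constant then comes out as the inverse of the first eigenvalue of $-\frac{d^2}{dx^2}$ on an interval of length $|\ln\de|$.

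\textbf{Step 1: reduction to one variable.} Let $u\in H^1(\Om_\de)$ with $u(\theta,1)=0$. Neither side of \eqref{04.10.1} involves $\pt_\theta u$, and both are integrals over $\theta\in(0,1)$ of one-dimensional quantities. By Fubini's theorem and the standard slicing property of $H^1$ on the rectangle $\Om_\de$, for a.e.~$\theta$ the slice $u(\theta,\cdot)$ belongs to $H^1(\de,1)$ and vanishes at $r=1$. Since $r\in[\de,1]$, the weights $1/r$ and $r$ are bounded above and below there, so every one-dimensional integral involved is finite. It therefore suffices to prove the sharp one-dimensional inequality
\[
\int_\de^1 \f{g^2}{r}\df r\le \f{4}{\pi^2}|\ln\de|^2\int_\de^1 r(g')^2\df r \qquad \forall g\in H^1(\de,1),\ g(1)=0,
\]
and then integrate it in $\theta$.

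\textbf{Step 2: logarithmic substitution.} Set $x=\ln(1/r)\in(0,L)$ with $L=|\ln\de|$, and $v(x)=g(e^{-x})$. Then $\df r/r=-\df x$ and $r g'(r)=-v'(x)$, so that
\[
\int_\de^1 \f{g^2}{r}\df r=\int_0^L v^2\df x,\qquad \int_\de^1 r(g')^2\df r=\int_0^L (v')^2\df x .
\]
The map is a bi-Lipschitz change of variables on the compact interval, so $v\in H^1(0,L)$. The condition $g(1)=0$ becomes $v(0)=0$, and $v$ is free at $x=L$. The problem is thus the best constant in $\int_0^L v^2\le C\int_0^L (v')^2$ for $v\in H^1(0,L)$ with $v(0)=0$.

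\textbf{Step 3: the one-dimensional constant.} The functions
\[
\phi_k(x)=\sqrt{2/L}\,\sin\big((2k+1)\pi x/(2L)\big),\qquad k\ge0,
\]
are the eigenfunctions of $-\frac{d^2}{dx^2}$ with Dirichlet condition at $0$ and Neumann condition at $L$. They form an orthonormal basis of $L^2(0,L)$, and $\{\phi_k'\}$ is an orthogonal family in $L^2(0,L)$. Expanding $v=\sum c_k\phi_k$, integrating by parts (admissible because $v(0)=0$ and $\phi_k'(L)=0$) gives
\[
\int_0^L (v')^2\df x=\sum_k \f{(2k+1)^2\pi^2}{4L^2}c_k^2\ \ge\ \f{\pi^2}{4L^2}\int_0^L v^2\df x .
\]
Hence $C_\de^{\rm crit}\le 4L^2/\pi^2$. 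For sharpness, take $u(\theta,r)=\sin\big(\pi\ln(1/r)/(2L)\big)$. It is smooth on $\ol{\Om_\de}$, vanishes at $r=1$, and corresponds to $v=\phi_0$ up to a constant, so it gives equality. This proves \eqref{04.10.2}, and the $|\ln\de|^2$ blow-up follows immediately.

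The only step needing care is Step 3: justifying the eigenfunction expansion and the termwise identity for $\int_0^L(v')^2\df x$ when $v$ is merely in $H^1$ with a one-sided boundary condition. To avoid the series altogether, one can instead use the ground-state substitution $v=\phi_0 q$ and the identity
\[
\int_0^L (v')^2\df x-\f{\pi^2}{4L^2}\int_0^L v^2\df x=\int_0^L \phi_0^2 (q')^2\df x\ge0,
\]
first for smooth $v$ vanishing near $0$ and then by density. Steps 1 and 2 are routine.
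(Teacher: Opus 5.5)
Your proposal is correct and follows the paper's proof: you slice in $\theta$, substitute $x=-\ln r$ to reduce \eqref{04.10.1} to the mixed Dirichlet--Neumann Poincar\'e inequality on $(0,|\ln\de|)$, and read off $C_\de^{\rm crit}=1/\mu_1=4|\ln\de|^2/\pi^2$. The paper asserts the one-dimensional constant in one line, whereas you justify it (by the ground-state identity, or by Bessel's inequality for the orthogonal family $\{\phi_k'\}$) and exhibit the $\theta$-independent extremal $\sin\big(\pi\ln(1/r)/(2|\ln\de|)\big)$.
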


\begin{proof}
	For a.e.~$\theta\in (0,1)$, the slice $u_\theta(r):=u(\theta,r)$ belongs to $H^1(\de,1)$ and satisfies $u_\theta(1)=0$ in the trace sense. Let
	\[
		x=-\ln r,\qquad L=|\ln \de|,\qquad v_\theta(x)=u_\theta(e^{-x}).
	\]
	Then $r\in (\de,1)$ corresponds to $x\in (0,L)$, and a direct computation yields
	\[
		\int_\de^1 \f{u_\theta(r)^2}{r}\df r=\int_0^L v_\theta(x)^2\df x,\qquad \int_\de^1 r(\pt_r u_\theta(r))^2\df r=\int_0^L (\pt_x v_\theta(x))^2\df x.
	\]
	Hence, for each such $\theta$, \eqref{04.10.1} is reduced to the standard one-dimensional Poincar\'e inequality on $(0,L)$ with the endpoint condition $v_\theta(0)=0$.
	
	The optimal constant is therefore the reciprocal of the first eigenvalue of
	\[
		-\pt_{xx}v=\mu v,\qquad v(0)=0,\qquad \pt_x v(L)=0.
	\]
	Therefore
	\[
		\mu_1=\f{\pi^2}{4L^2},\qquad C_\de^{\rm crit}=\f{1}{\mu_1}=\f{4}{\pi^2}L^2=\f{4}{\pi^2}|\ln \de|^2.
	\]
	Integrating the one-dimensional inequality with respect to $\theta\in (0,1)$ gives the corresponding two-dimensional statement on $\Om_\de$.
\end{proof}

\begin{remark}\label{04.10.R1}
		Proposition \ref{04.10.P1} does not prove that the endpoint problem at $\al=1$ is ill-posed, nor does it prove that observability fails at $\al=1$. What it shows is narrower and more robust: the coercive mechanism supporting the present weighted Dirichlet framework cannot remain uniform at the critical threshold. Thus the obstruction is functional analytic rather than a statement about all possible endpoint formulations. A treatment of $\al=1$ would require a critical, typically logarithmic, functional framework adapted to the endpoint topology.
\end{remark}

\begin{remark}\label{04.10.R2}
	The subcritical analysis in Section \ref{S2} relies on the Hardy inequality in Lemma \ref{03.11.L1} and on the resulting norm equivalence in Remark \ref{03.11.R1}. In particular, the weighted $L^2$ term is controlled by the radial energy through a coercive constant of order $(1-\al)^{-2}$. Proposition \ref{04.10.P1} shows that, at the critical exponent $\al=1$, the analogous coercive constant on truncated domains necessarily grows like $|\ln \de|^2$. Therefore, if one insists on preserving the same weighted Dirichlet-trace framework and the same coercive norm equivalence, no uniform continuation of the present subcritical mechanism to $\al=1$ is possible.
\end{remark}

\begin{proposition}\label{04.10.P2}
	Let $\widetilde C_\de^{\rm crit}$ be the best constant in
	\begin{equation}\label{04.10.3}
		\iint_{\Om_\de}\f{u^2}{r}\df z\leq \widetilde C_\de^{\rm crit}\iint_{\Om_\de} r(\pt_r u)^2\df z
	\end{equation}
	for all $u\in H_0^1(\Om_\de)$. Then
	\begin{equation}\label{04.10.4}
		\widetilde C_\de^{\rm crit}=\f{1}{\pi^2}|\ln \de|^2.
	\end{equation}
\end{proposition}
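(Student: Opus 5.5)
The plan is to mirror the proof of Proposition \ref{04.10.P1}, using the logarithmic change of variables to reduce \eqref{04.10.3} to a family of one-dimensional Poincar\'e inequalities, now with Dirichlet conditions at both radial endpoints. The one structural difference is that $H_0^1(\Om_\de)$ also imposes zero trace on the lateral sides $\theta=0,1$. However, the inequality involves only $\pt_r u$, so the $\theta$-variable is a passive parameter. The lateral condition can only shrink the admissible class, and we must check that it does not change the best constant.

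\emph{Upper bound.} By density, I would take $u\in C_0^\iy(\Om_\de)$. For each $\theta\in(0,1)$, set $x=-\ln r$, $L=|\ln\de|$ and $v_\theta(x)=u(\theta,e^{-x})$. Exactly as in the proof of Proposition \ref{04.10.P1},
\[
\int_\de^1\f{u^2}{r}\df r=\int_0^L v_\theta^2\df x,\qquad \int_\de^1 r(\pt_ru)^2\df r=\int_0^L(\pt_xv_\theta)^2\df x,
\]
and now $v_\theta(0)=v_\theta(L)=0$. The first Dirichlet eigenvalue of $-\pt_{xx}$ on $(0,L)$ is $\pi^2/L^2$, so
\[
\int_0^Lv_\theta^2\df x\le \f{L^2}{\pi^2}\int_0^L(\pt_xv_\theta)^2\df x .
\]
Integrating in $\theta$ gives $\widetilde C_\de^{\rm crit}\le \f{1}{\pi^2}|\ln\de|^2$. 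The estimate then extends to all of $H_0^1(\Om_\de)$ by density, since both sides are continuous in the $H^1(\Om_\de)$ norm: $r$ is bounded above and below on $\Om_\de$.

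\emph{Lower bound (optimality).} This is the step needing care, because the extremal $\sin(\pi x/L)=\sin(\pi\ln r/\ln\de)$ cannot be multiplied by a function of $\theta$ that is identically $1$ without violating the lateral Dirichlet condition. I would use the product test functions
\[
u_k(\theta,r)=\chi_k(\theta)\sin\!\Big(\pi\f{\ln r}{\ln\de}\Big),
\]
with $\chi_k\in C_0^\iy(0,1)$ and $\chi_k\not\equiv0$. These lie in $H_0^1(\Om_\de)$. Both sides of \eqref{04.10.3} factor as $\int_0^1\chi_k^2\df\theta$ times the corresponding one-dimensional integrals, so the Rayleigh quotient equals exactly $L^2/\pi^2$, independently of $\chi_k$. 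Hence $\widetilde C_\de^{\rm crit}\ge\f{1}{\pi^2}|\ln\de|^2$; a single fixed $\chi$ already suffices, and no limiting sequence is needed. Combining the two bounds proves \eqref{04.10.4}.

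The only potential obstacle is justifying the slice-wise change of variables for a general $u\in H_0^1(\Om_\de)$. Working with smooth compactly supported approximants and then passing to the limit by density avoids any trace issues, in the same spirit as the proof of Lemma \ref{03.11.L1}.
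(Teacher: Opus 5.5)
Your proposal is correct and follows essentially the paper's proof. You use the same density reduction and the same logarithmic substitution $x=-\ln r$, reducing to the Dirichlet--Dirichlet Poincar\'e inequality on $(0,L)$, and you prove optimality with a separated test function. The only cosmetic difference is that the paper uses the angular factor $\sin(\pi\theta)$ where you allow any $\chi\in C_0^\infty(0,1)$; this works for the same reason, since the Rayleigh quotient does not depend on the $\theta$-factor.
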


\begin{proof}
	By density, it is enough to consider $u\in C_0^\iy(\Om_\de)$. For each fixed $\theta\in (0,1)$, the slice $u_\theta(r)=u(\theta,r)$ vanishes at both $r=\de$ and $r=1$. With the same logarithmic change of variables
	\[
		x=-\ln r,\qquad L=|\ln \de|,\qquad v_\theta(x)=u_\theta(e^{-x}),
	\]
	we obtain
	\[
		\int_\de^1 \f{u_\theta(r)^2}{r}\df r=\int_0^L v_\theta(x)^2\df x,\qquad \int_\de^1 r(\pt_r u_\theta(r))^2\df r=\int_0^L (\pt_x v_\theta(x))^2\df x,
	\]
	and now $v_\theta(0)=v_\theta(L)=0$. Hence \eqref{04.10.3} reduces to the one-dimensional Dirichlet-Dirichlet Poincar\'e inequality on $(0,L)$, whose best constant is the reciprocal of the first eigenvalue of
	\[
		-\pt_{xx}v=\mu v,\qquad v(0)=v(L)=0.
	\]
	Since $\mu_1=\pi^2/L^2$, we obtain $\widetilde C_\de^{\rm crit}\leq L^2/\pi^2$.

	To see that this constant is optimal, choose $u(\theta,r)=\sin(\pi\theta)\sin(\pi x/L)$ with $x=-\ln r$. This function belongs to $H_0^1(\Om_\de)$ and saturates the one-dimensional radial Poincar\'e quotient after integration in $\theta$. Therefore $\widetilde C_\de^{\rm crit}=L^2/\pi^2=\pi^{-2}|\ln\de|^2$.
\end{proof}

\begin{remark}\label{04.10.R3}
	Proposition \ref{04.10.P2} is the exact analogue of Proposition \ref{04.10.P1} for the fully truncated Dirichlet space. It shows that the same logarithmic obstruction persists even when the truncation is matched more directly with the Dirichlet framework used in Section \ref{S2}; only the sharp constant changes from $4\pi^{-2}$ to $\pi^{-2}$.
\end{remark}

\end{document}